  \newcommand{\labelformat}[1]{%
    \expandafter\renewcommand\csname p@#1\endcsname[1]%
  }
  \newcommand{\numberlike}[2]{%
     \expandafter\def\csname c@#1\endcsname{%
         \expandafter\csname c@#2\endcsname}%
  }
  \newcommand{\mynewtheorem}[2]{
    \newtheorem{#1}{#2}[section]
    \labelformat{#1}{#2~##1}
  }
  \theoremstyle{plain}
  \theoremstyle{definition}
  \theoremstyle{remark}
\newcommand{\NN}{\mathbb N}
\newcommand\set[1]{\left\{\,#1\,\right\}}  
\newcommand\with{\ \vrule\ }  
\newcommand{\ohne}[1]{\setminus\{#1\}}
\newcommand{\cc}[1]{#1^c}
\newcommand{\lk}[2][\de]{lk_{#1}(#2)}  
\newcommand{\bd}[1]{\partial #1} 
\newcommand{\join}{\mathrel{\ast}}
\DeclareMathOperator{\sus}{\Sigma_{1,v}}
\DeclareMathOperator{\tsus}{\Sigma_{2}}
\newcommand{\de}{\ensuremath{\Delta} }
\newcommand{\al}[1][\de]{\ensuremath{\alpha(#1)} }
\newcommand{\defa}{:=} 
\newcommand{\MM}[1][\de]{\ensuremath{\mathcal M(#1)} }
\newcommand{\LL}[1][\de]{L_{#1}}
\newcommand{\hh}[2]{b_{#1}(#2)}
\newcommand{\I}[2]{\ensuremath{I(#1, #2)}}
\begin{document}
\title[On homology spheres with few minimal non-faces]{On homology spheres with few minimal non-faces}

\author{Lukas Katth\"an}
\address{Fachbereich Mathematik und Informatik, Phillips Universität, 35032 Marburg, Germany}%

\email[L. Katth\"an]{katthaen@mathematik.uni-marburg.de}%

\thanks{This work was partially supported by the DAAD and the DFG}
\date{\today}

\subjclass[2010]{Primary 52B05, 05E45; Secondary 13F55}

\begin{abstract}
Let \de be a $(d-1)$-dimensional homology sphere on $n$ vertices with $m$ minimal non-faces. We consider the invariant $\al = m - (n-d)$ and prove that for a given value of $\alpha$, there are only finitely many homology spheres that cannot be obtained through one-point suspension and suspension from another. Moreover, we describe all homology spheres with $\al$ up to four and, as a corollary, all homology spheres with up to eight minimal non-faces. To prove these results we consider the lcm-lattice and the nerve of the minimal non-faces of $\de$. Also, we give a short classification of all homology spheres with $n-d \leq 3$.
\end{abstract}

\maketitle

\section{Introduction}
Let \de be a simplicial complex with vertices in $[n] \defa \set{1,\hdots,n}$ for an $n \in \mathbb N$. A minimal non-face of \de is a set $M \subset [n]$ with $M \notin \de$, but $M\ohne{i}\in \de$ for every $i \in M$. Note that a face of \de may be described as a set not containing a minimal non-face, hence \de is completely determined by its set of minimal non-faces. In the present work we consider the structure of minimal non-faces of homology spheres. Using a result about the lcm-lattice of a monomial ideal by Gasharov, Peeva and Welker \cite{lcm}, we obtain structural results and descriptions of homology spheres with few minimal non-faces.
This paper is organized as follows: In Section \ref{sec:elem} we introduce some notation and constructions for general simplicial complexes. In Section \ref{sec:lcm} we specialize to homology spheres and prove that for a given value of $\al \defa m - (n-d)$, there are only finitely many homology spheres that cannot be obtained through suspension and one-point suspension from another. Here, $m$ denotes the number of minimal non-faces and $d-1$ is the dimension of $\de$. In Section \ref{sec:kleinalpha} we consider homology spheres with $\al$ up to four. In Section \ref{sec:app} we   classify homology spheres with $n-d\leq 3$, thus generalizing a result of Mani \cite{Mani}. This result is somewhat unrelated to the main topic of this paper but we need it as an auxiliary result.
%
%
%
%
\section{Elementary constructions and notation} \label{sec:elem}
Let \de be a not necessarily pure simplicial complex. A face $F \in \de$ is called a \emph{facet} if it is not contained in another face. 
The \emph{join} of two simplicial complexes $\de$ and $\Gamma$ on disjoint sets of vertices is
\[ \de \join \Gamma \defa \set{S \cup T \with S\in \de, T \in \Gamma} \,.\]
Note that the set of minimal non-faces of $\de \join \Gamma$ is just the union of the sets of minimal non-faces of \de and $\Gamma$. The \emph{link} of a vertex $v$ in a complex $\de$ is $\lk{v} \defa \set{S \in \de \with v \notin S, S \cup \set{v} \in \de}$. For a subset $W \subset [n]$ of the vertices of $\de$ let
$ \de_W \defa \set{S \in \de \with S \subset W}$
denote the \emph{induced subcomplex} of $\de$.
The \emph{one-point suspension} of \de at $v$ is 
\[ \sus \de \defa \set{ S, S\cup \set{v}, S\cup \set{v'} \with v \notin S \in \de} \cup \set{ S \cup \set{v,v'} \with S \in \lk{v}} \,.\]
where $v$ is a vertex of $\de$ and $v'$ denotes a new vertex. Note that every minimal non-face of $\sus \de$ containing $v'$ also contains $v$ and vice versa. We can reconstruct $\de$ from $\sus\de$ as a link: $\de = \lk[\sus\de]{v'}$.
The \emph{two-point suspension} of \de is
\[ \tsus \de \defa \set{ S, S\cup \set{w}, S\cup \set{w'} \with S \in \de } \]
where $w$ and $w'$ are new vertices. Note that the two-point suspension is just the join with the boundary of a $1$-simplex. As above, we can reconstruct $\de$ from $\tsus\de$ as a link: $\de = \lk[\tsus\de]{w}$.
The operations $\sus$ and $\tsus$ commute (if they are defined). See \cite{joswig2005one} for a comprehensive discussion of these constructions. We call a simplicial complex \emph{unsuspended} if it cannot be obtained from another complex by an iterated application of $\sus$ and $\tsus$. 
\begin{lemma} \label{lemma:dprime}
For every simplicial complex $\de$, there is an up to isomorphism unique unsuspended complex $\de'$, such that $\de$ is obtained from $\de'$ by an iterated application of $\sus$ and $\tsus$.
\end{lemma}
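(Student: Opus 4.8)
The plan is to establish two separate assertions: existence of an unsuspended $\de'$, and its uniqueness up to isomorphism. Existence is essentially a termination argument. Both $\sus$ and $\tsus$ strictly increase the number of vertices (the one-point suspension adds one vertex $v'$, the two-point suspension adds two vertices $w,w'$). Hence if $\de$ is obtained from some complex by applying these operations, the source complex has strictly fewer vertices. So I would argue by induction on $n$, the number of vertices of $\de$: if $\de$ is already unsuspended, take $\de' = \de$; otherwise $\de = \sus \Gamma$ or $\de = \tsus \Gamma$ for some $\Gamma$ with fewer vertices, and by the induction hypothesis $\Gamma$ reduces to an unsuspended $\de'$, which then also witnesses the claim for $\de$ since the operations compose.

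The substantive part is uniqueness, and here the key tool is the reconstruction identities noted in the excerpt: from $\sus \de$ we recover $\de = \lk[\sus\de]{v'}$, and from $\tsus \de$ we recover $\de = \lk[\tsus\de]{w}$. The first step is to show that the property of being a one-point or two-point suspension is \emph{detectable} and \emph{invertible} in an essentially canonical way. Concretely, I want a lemma of the following shape: if a complex $\Sigma$ can be written both as $\sus \Gamma_1$ (or $\tsus \Gamma_1$) and as $\sus \Gamma_2$ (or $\tsus \Gamma_2$) via possibly different choices of suspension vertices, then $\Gamma_1 \cong \Gamma_2$, unless the two presentations can be swapped using the commutativity of $\sus$ and $\tsus$ asserted in the excerpt. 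The combinatorial content is that a suspension is recognizable from the structure of its minimal non-faces: recall that in $\sus \de$ every minimal non-face containing $v'$ also contains $v$ and vice versa, so $v,v'$ form a pair that appears together in exactly the same minimal non-faces, while in $\tsus \de$ the pair $\{w,w'\}$ is itself a minimal non-face and no other minimal non-face meets $\set{w,w'}$. These signatures let me identify the suspension vertices intrinsically.

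With such a local uniqueness lemma in hand, uniqueness of the fully reduced $\de'$ follows by a confluence or diamond-lemma argument on the rewriting system whose moves are ``undo a $\sus$'' and ``undo a $\tsus$.'' The strategy is to show this reduction system is locally confluent: whenever $\de$ admits two distinct reduction steps, the two resulting complexes can each be further reduced to a common complex. The commutativity of $\sus$ and $\tsus$ is exactly what is needed to close the diamonds when the two steps act on disjoint suspension vertices; the local uniqueness lemma handles the case where they compete for overlapping vertices. Since every reduction strictly decreases the vertex count, the system is terminating (Noetherian), and by Newman's lemma local confluence plus termination yields global confluence, hence a unique normal form up to isomorphism.

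The main obstacle I anticipate is the case analysis in the local uniqueness lemma: I must carefully enumerate how two suspension operations can interact on a shared vertex set and verify that each overlap either collapses by commutativity or forces an isomorphism of the two predecessors. In particular I would watch for degenerate coincidences — for example when the new vertex of one suspension is chosen as the base vertex of another, or when a $\tsus$ pair overlaps a $\sus$ pair — and check that the minimal-non-face signatures above still distinguish the operations unambiguously. Verifying confluence in these boundary cases, rather than the termination bookkeeping, is where the real work lies.
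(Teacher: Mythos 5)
Your strategy is sound and would prove the lemma, but it is organized quite differently from the paper's argument. You set up an abstract rewriting system whose moves are ``undo a $\sus$'' and ``undo a $\tsus$'', prove termination by vertex count, and then appeal to Newman's lemma, which obliges you to verify local confluence by a case analysis of overlapping suspension structures --- exactly the part you defer as ``the real work.'' The paper avoids confluence altogether by passing to the set $\mathbb{M}$ of minimal non-faces and describing the normal form in one shot: a $\tsus$ contributes an isolated two-element set (disjoint from all other minimal non-faces), and a $\sus$ contributes a pair of vertices lying in exactly the same minimal non-faces; one therefore defines $\mathbb{M}'$ by deleting all isolated sets and collapsing each equivalence class of such ``doubled'' vertices to a single representative. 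Since the isolated sets and the doubling equivalence classes are invariants of $\mathbb{M}$, the reduced configuration is canonical up to the irrelevant choice of representatives, and uniqueness is immediate --- no diamond chasing is needed. If you pursue your route, one point to nail down in the local analysis: two vertices contained in the same minimal non-faces can arise either from a $\sus$ (in which case their union is a face of the complex) or as the two vertices of an isolated two-element minimal non-face coming from a $\tsus$ (in which case it is not), and these cases must be separated when defining the legal ``undo $\sus$'' move; once that is done the overlaps do close, essentially because ``contained in the same minimal non-faces'' is an equivalence relation. Your approach buys a general termination-plus-confluence template; the paper's buys a shorter argument with an explicitly canonical normal form.
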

\begin{proof}
\newcommand{\mns}{\mathbb M}
Every simplicial complex is uniquely determined by its minimal non-faces. Therefore we can focus on the set $\mns$ of minimal non-faces of \de instead of on \de itself.

Let us describe the effect of the one- and two-point suspensions on $\mns$. A two-point suspension adds a new set $\set{w,w'}$ on two new vertices $w, w'$ to $\mns$, so this new set is disjoint from every other set. Let us call such a set \emph{isolated}. A one-point suspension $\sus$ corresponds to `doubling' the vertex $v$ in $\mns$: The new vertex $v'$ will be added to exactly those sets in $\mns$ that already contain $v$.

If we are given a configuration $\mns$, then it is clear that by removing all isolated sets and all `double' vertices we can construct a minimal configuration $\mns'$ of sets such that $\mns$ can be reproduced using the operations described above. If $v'$ is a `double' of a vertex $v$, i.e. both are contained in exactly the same minimal non-faces, then we have to choose which one to remove. But the two resulting configurations are isomorphic. Therefore, $\mns'$ is unique up to isomorphism. If there is more than one vertex contained in no minimal non-face at all, these vertices are also to be regarded as `double'.

The configuration $\mns$ is the set of minimal non-faces of a simplicial complex if and only if no set in $\mns$ is contained in another one. This property then also holds for $\mns'$, so there is an unique simplicial complex $\de'$ which has the sets in $\mns'$ as its minimal non-faces. This complex is unsuspended, because $\mns'$ was chosen minimal.

There is one degenerate case: If all sets in $\mns$ are isolated and every vertex is contained in a minimal non-face, then let $\de' := \set{\emptyset}$, which is unsuspended. 
\end{proof}
Moreover, if $\de$ is a simplicial complex and $\de'$ is the complex postulated in \ref{lemma:dprime}, then from the definition of $\sus$ and $\tsus$ it follows that $\de'$ is a link of $\de$. Hence $\de$ is a homology sphere, a PL-sphere or polytopal if and only if $\de'$ has the corresponding property. From the above proof we also get a useful criterion for being unsuspended:
\begin{lemma}
A simplicial complex is unsuspended if and only if its minimal non-faces are point-separating, i.e. for every two vertices there is a minimal non-face containing exactly one of them.
\end{lemma}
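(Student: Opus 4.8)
The plan is to prove both directions by contraposition, leaning on the description of how $\sus$ and $\tsus$ act on the set $\mathbb M$ of minimal non-faces that was established in the proof of \ref{lemma:dprime}. The conceptual heart of the argument is a single reformulation: two vertices $x,y$ fail to be separated by the minimal non-faces exactly when every minimal non-face contains both of them or neither, which is to say that $x$ and $y$ lie in precisely the same minimal non-faces. In the language of the previous proof this means that $x$ and $y$ are \emph{doubles} of one another. Thus ``not point-separating'' is synonymous with ``$\mathbb M$ contains a pair of double vertices,'' and the lemma is equivalent to the assertion that $\de$ is unsuspended if and only if $\mathbb M$ has no such pair.

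For the implication ``unsuspended $\Rightarrow$ point-separating'' I argue the contrapositive. Suppose $\de$ is not point-separating and choose an unseparated pair $x,y$, which are then doubles. Following the reduction in the proof of \ref{lemma:dprime}, I delete one of them from every minimal non-face containing it; the resulting configuration defines a complex $\Gamma$ with $\de=\sus\Gamma$. Two boundary cases must be peeled off first: if the common family of minimal non-faces through $x$ and $y$ consists of the single set $\set{x,y}$, then this set is isolated and $\de=\tsus\Gamma$ is a two-point suspension instead; and if $x,y$ lie in no minimal non-face at all, they fall under the degenerate free-vertex case of \ref{lemma:dprime}. In each case $\de$ arises from a smaller complex via $\sus$ or $\tsus$, hence is not unsuspended.

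For the converse ``point-separating $\Rightarrow$ unsuspended'' I again take the contrapositive, assuming $\de=\sus\Gamma$ or $\de=\tsus\Gamma$ and producing an unseparated pair. If $\de=\sus\Gamma$, the new vertex $v'$ is by construction adjoined to exactly those minimal non-faces that contain $v$, so $v$ and $v'$ occupy the same minimal non-faces and no minimal non-face separates them. If $\de=\tsus\Gamma$, the adjoined pair $\set{w,w'}$ is an isolated minimal non-face, so the only minimal non-face meeting $w$ is $\set{w,w'}$, which also contains $w'$; once more $w$ and $w'$ are unseparated. Either way $\de$ fails to be point-separating.

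I expect the one delicate point to be the reduction in the second paragraph: one must verify that deleting a double vertex from each minimal non-face containing it again yields an antichain, so that the outcome genuinely is the minimal-non-face set of a complex $\Gamma$ satisfying $\de=\sus\Gamma$. This is precisely where the case split is needed, and it hinges on the remark that if $\set{x,y}$ is itself a minimal non-face then, by the antichain property, no other minimal non-face can contain both $x$ and $y$; so $\set{x,y}$ is the \emph{only} minimal non-face through the pair, which is exactly the isolated configuration created by a two-point, rather than a one-point, suspension. Beyond this observation the verification is routine bookkeeping already implicit in the proof of \ref{lemma:dprime}.
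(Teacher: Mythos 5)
Your proof is correct and follows essentially the same route as the paper, which derives this lemma directly from the analysis of how $\sus$ and $\tsus$ act on the set of minimal non-faces in the proof of \ref{lemma:dprime} (doubled vertices, isolated pairs, and free vertices being exactly the unseparated pairs). You merely make explicit the case analysis and the antichain check that the paper leaves implicit.
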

From this we see that the maximal number of vertices an unsuspended simplicial complex with $m$ minimal non-faces can have is bounded by a function of $m$. Hence there are only finitely many unsuspended complexes with a given number of minimal non-faces. Having this in mind, it seems natural to consider the way the minimal non-faces intersect, as we do with the next definition:
\begin{definition}
Let \de be a $(d-1)$-dimensional simplicial complex on $n$ vertices with $m$ minimal non-faces $M_i ,\, 1\leq i \leq m$. Define $\al \defa m - (n-d)$. Also define the \emph{nerve} of the minimal non-faces of \de to be
\[ \MM \defa \set{S\subset [m] \with \bigcap_{i\in S} M_i \neq \emptyset} \]
\end{definition}
Note that $\MM[\sus\de] = \MM$ and $\MM[\de \join \Gamma]$ is the disjoint union of \MM and $\MM[\Gamma]$, so \de is join-irreducible if \MM is connected. On the other hand, if \MM is not connected \de can be written as a join as follows: The vertices of \de can be partitioned in two non-empty sets $V_1$ and $V_2$ such that no minimal non-face of \de intersects with both sets. Then $\de = \de_{V_1} \join \de_{V_2}$. 
Neither one- nor two-point suspensions alter the value of $\al$ and $\al[\de \join \Gamma] = \al + \al[\Gamma]$. We give a possible interpretation of the invariant $\al$: We call a set of vertices a \emph{blocking set} for the minimal non-faces if it has a non-empty intersection with every minimal non-face. Note that the blocking sets are exactly the complements of the faces of $\de$. Therefore every blocking set has at least $n-d$ vertices. One can always construct a blocking set for the $m$ minimal non-faces using $m$ vertices. The number \al measures how many fewer are needed. From this we immediately get:
\begin{proposition} \label{prop:alphapos}
Every vertex of \de is contained in at most $\al + 1$ minimal non-faces. In particular, $0 \leq \dim{\MM} \leq \al$.
\end{proposition}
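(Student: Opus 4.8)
The plan is to exploit the interpretation of $\al$ in terms of blocking sets introduced just above. For the first assertion, fix a vertex $v$ and suppose it lies in exactly $k$ of the minimal non-faces. The idea is to build an explicit blocking set that is small precisely because $v$ settles many minimal non-faces at once: the single vertex $v$ already blocks all $k$ minimal non-faces containing it, and for each of the remaining $m-k$ minimal non-faces we may pick one of its vertices. Together these give a blocking set with at most $1+(m-k)$ elements.

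Next I would invoke the bound recorded above, that every blocking set has at least $n-d$ vertices, since blocking sets are exactly the complements of faces and the largest face has $d$ vertices. Comparing the two estimates gives
\[ n-d \leq 1 + (m-k), \]
which rearranges, using $\al \defa m-(n-d)$, to $k \leq \al+1$, as claimed.

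For the ``in particular'' statement, I would simply unwind the definition of the nerve. A set $S \subset [m]$ is a face of $\MM$ exactly when the minimal non-faces indexed by $S$ admit a common vertex, and any such common vertex lies in at least $|S|$ minimal non-faces; conversely, for any vertex $v$ the indices of the minimal non-faces containing $v$ form a face of $\MM$. Hence the largest face of $\MM$ has size equal to the largest number of minimal non-faces sharing a common vertex, so that $\dim \MM$ is one less than this quantity. The upper bound $\dim \MM \leq \al$ is then immediate from the first part, while $\dim \MM \geq 0$ follows because each singleton $\set{i}$ is a face of $\MM$ (as $M_i \neq \emptyset$) whenever $\de$ has a minimal non-face at all.

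I do not anticipate a serious obstacle; the construction of the blocking set is the crux, and the only point needing a little care in the second half is the two-way correspondence between faces of $\MM$ and vertices shared by several minimal non-faces, which is exactly what lets the first part be applied verbatim.
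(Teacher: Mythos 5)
Your proof is correct and is exactly the argument the paper intends: the paper gives no explicit proof, stating the proposition follows "immediately" from the blocking-set interpretation of $\al$, and your construction of the blocking set of size $1+(m-k)$ together with the bound $n-d$ on blocking sets is precisely that argument. The translation to $\dim\MM$ via common vertices of minimal non-faces is likewise the intended reading of the definition of the nerve.
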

Of course, the statement $\al \geq 0$ is just the well known lower bound $m \geq n-d$ on the number of minimal non-faces. Note that $\al = 0$ implies \MM to be zero dimensional, thus all minimal non-faces are disjoint. In this case \de is the join of boundaries of simplices and a (full) simplex.
%
%
%
%
%
\section{Minimal non-faces of homology spheres} \label{sec:lcm}
We will use a result of Gasharov, Peeva and Welker \cite{lcm} which needs some additional notation to be stated. For a set $S \subset [n]$, let $\cc{S} \defa [n]\setminus S$ denote its complement. Fix a field $k$ and a simplicial complex \de on $n$ vertices with minimal non-faces $M_1, \hdots, M_m$. Let 
\[\LL \defa \set{M_{i_1} \cup \hdots \cup M_{i_k} \with \set{i_1,\hdots,i_k}\subset [m]} \]
 denote the lattice of unions of minimal non-faces, ordered by inclusion. Here, the union over the empty set is considered to be the empty set. Furthermore, for $S \in \LL$ let
\[(\emptyset, S)_{\LL} \defa \set{ A \in \LL \with \emptyset \subsetneq A \subsetneq S}\]
denote the open lower interval in $\LL$, thought of as a simplicial complex by considering its order complex; that is the simplicial complex of all linearly ordered subsets. Finally, for a set $S \subset [n]$, we define 
\[ \hh{i}{S} \defa \begin{cases}
\dim \tilde{H}_{i}((\emptyset, S)_{\LL}, k) &\text{if } S \in \LL \\
0 & \text{otherwise}
\end{cases} \]
With this notation, an immediate consequence of Theorem 2.1 in \cite{lcm} is the following:
\begin{proposition} \label{satz:lcm}
Let \de be a $(d-1)$-dimensional homology sphere on $n$ vertices. Then for $S \subset [n]$ and $i \geq 1$ we have:
\[ \hh{i-2}{S} = \hh{n-d-i-2}{\cc{S}} \]
In particular, if $\hh{i-2}{S} > 0$ for any $i$, then $\cc{S} \in \LL$.
\end{proposition}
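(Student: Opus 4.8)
The plan is to reinterpret $\hh{i-2}{S}$ as a multigraded Betti number of the Stanley--Reisner ring and then exploit the Gorenstein self-duality of its free resolution. First I would observe that the lattice $\LL$ is exactly the lcm-lattice of the Stanley--Reisner ideal $I_\de \subset R \defa k[x_1,\dots,x_n]$: the minimal non-face $M_i$ corresponds to the squarefree generator $\prod_{v\in M_i} x_v$, squarefree multidegrees correspond to subsets of $[n]$, and the least common multiple of a collection of these generators corresponds to the union of the respective $M_i$. Theorem 2.1 of \cite{lcm} then identifies the order-complex homology of an open interval with a multigraded Betti number; in our normalization this reads $\hh{i-2}{S} = \beta_{i,S}(R/I_\de)$ for $S\in\LL$, and both sides vanish when $S\notin\LL$, matching the definition of $\hh{\cdot}{\cdot}$.

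Next I would use that \de is a homology sphere. A homology sphere is not contractible and hence has no cone points, so every vertex lies in some minimal non-face; thus $R/I_\de = k[\de]$ is Gorenstein$^*$, i.e.\ Gorenstein with canonical module concentrated in multidegree $\mathbf 1 = [n]$. Being Cohen--Macaulay of Krull dimension $d$, its minimal free $R$-resolution has length $n-d$ by the Auslander--Buchsbaum formula, and the Gorenstein property makes this resolution self-dual. Dualizing against $R(-\mathbf 1)$ and matching the $i$-th free module with the $(n-d-i)$-th one turns a summand in multidegree $S$ into one in multidegree $\mathbf 1 - S = \cc{S}$, yielding the symmetry $\beta_{i,S}(R/I_\de) = \beta_{n-d-i,\cc{S}}(R/I_\de)$.

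Combining the two steps gives $\hh{i-2}{S} = \beta_{i,S} = \beta_{n-d-i,\cc{S}} = \hh{n-d-i-2}{\cc{S}}$, which is the asserted identity. The ``in particular'' clause is then immediate: if $\hh{i-2}{S}>0$ then $\hh{n-d-i-2}{\cc{S}}>0$, and by the very definition of $\hh{\cdot}{\cdot}$ a nonzero value is only possible when its second argument lies in $\LL$, so $\cc{S}\in\LL$.

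The step I expect to be the main obstacle is pinning down the multidegree shift in the Gorenstein duality so that it is precisely the full complement $S \mapsto \cc{S}$ with shift $\mathbf 1$; this is where the homology-sphere hypothesis enters in an essential way, since it is the absence of cone points that forces the canonical module into multidegree $[n]$ rather than a proper subset. A secondary point requiring care is bookkeeping the index conventions---the shift by $2$ coming from \cite{lcm} against the resolution length $n-d$---together with the behaviour of the reduced homology of the degenerate empty intervals $(\emptyset,\emptyset)_{\LL}$, though these edge cases do not affect the main line of argument.
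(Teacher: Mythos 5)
Your proposal follows essentially the same route as the paper's own proof: invoke Theorem 2.1 of \cite{lcm} to identify $\hh{i-2}{S}$ with the multigraded Betti number $\beta_{i,S}$ of the Stanley--Reisner ring, then use the self-duality of the minimal free resolution of a Gorenstein complex to obtain $\beta_{i,S} = \beta_{n-d-i,\cc{S}}$. Your additional care about the canonical module sitting in multidegree $[n]$ (the Gorenstein$^*$ property, forced by the absence of cone points in a homology sphere) is a correct and worthwhile elaboration of a point the paper leaves implicit.
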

\begin{proof}
Theorem 2.1 in \cite{lcm} establishes $\beta_{i,S} = \hh{i-2}{S}$ for every simplicial complex, where $\beta_{i,S}$ is the $i$-th multi-graded Betti number of the Stanley-Reisner-Ring of $\de$ over $k$. Every homology sphere is a Gorenstein complex, see \cite[II.5 Theorem 5.1]{stan}, so its minimal free resolution is self dual, see Chapter I.12 in \cite{stan} for details. In particular, we have $\beta_{i,S} = \beta_{n-d-i, \cc{S}}$.
\end{proof}
This result imposes strong conditions on the structure of the minimal non-faces of a homology sphere. It is crucial for most of the sequel. From this we will derive at the end of this section our first main result. But first let us prove the following useful proposition:
\begin{proposition}\label{lemma:eindeutig}
If \de is an unsuspended homology sphere, then \de is uniquely determined by $\MM$. Moreover, the facets of \MM are exactly the sets
\begin{equation}\label{eq:Fi}
 F_i \defa \set{j\in [m]\with i \in M_j} \text{ for } 1 \leq i \leq n \text{ and } F_i \neq F_j \text{ for } i \neq j \,. 
\end{equation}
\end{proposition}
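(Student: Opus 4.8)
The plan is to first unwind the definition of the nerve so as to see that $\MM$ is \emph{always} the simplicial complex generated by the sets $F_i$, and then to use the two hypotheses separately: unsuspendedness will give that the $F_i$ are pairwise distinct, while the homology sphere condition (through Proposition~\ref{satz:lcm}) will upgrade this to the statement that the $F_i$ form an antichain, so that each of them is a facet. Concretely, for $S \subset [m]$ one has $S \in \MM$ if and only if $\bigcap_{i\in S} M_i \neq \emptyset$, i.e.\ if and only if there is a vertex $\ell \in [n]$ with $S \subseteq F_\ell$. Hence the faces of $\MM$ are exactly the subsets of the $F_i$, and the facets of $\MM$ are the maximal sets among $F_1,\dots,F_n$. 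The distinctness $F_i \neq F_j$ for $i \neq j$ is then immediate from the characterisation of unsuspended complexes: if $F_i = F_j$ then $i$ and $j$ lie in exactly the same minimal non-faces, so no minimal non-face separates them, contradicting that the minimal non-faces of an unsuspended complex are point-separating.

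The heart of the argument is to show that no $F_i$ is strictly contained in another $F_j$. First I would record a consequence of Proposition~\ref{satz:lcm}: for each minimal non-face $M_j$ the complement $\cc{M_j}$ lies in $\LL$, i.e.\ it is a union of minimal non-faces. Indeed $M_j$ is an atom of $\LL$ (since the minimal non-faces form an antichain, no union of them lies strictly between $\emptyset$ and $M_j$), so the open interval $(\emptyset, M_j)_{\LL}$ is empty and $\hh{-1}{M_j} = 1 > 0$; the ``in particular'' clause of Proposition~\ref{satz:lcm} then yields $\cc{M_j} \in \LL$. Now suppose for contradiction that $F_i \subsetneq F_{i'}$ for some $i \neq i'$. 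Then every minimal non-face containing $i$ also contains $i'$, and there is a minimal non-face $M_j$ with $i' \in M_j$ but $i \notin M_j$. Writing $\cc{M_j} = \bigcup_{k\in T} M_k$ and using $i \notin M_j$, we get $i \in \cc{M_j}$, so $i \in M_k$ for some $k \in T$; but then $i' \in M_k \subseteq \cc{M_j}$, contradicting $i' \in M_j$. Hence the $F_i$ are pairwise incomparable.

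Combining the two observations, the $F_i$ are distinct and form an antichain, and — using that a homology sphere has no cone point, so that every $F_i$ is non-empty — they are exactly the facets of $\MM$, which establishes the ``moreover'' part. For the uniqueness statement I would reconstruct $\de$ from $\MM$ as follows: the vertices of $\MM$ are the indices of the minimal non-faces of $\de$, the facets of $\MM$ are the vertices of $\de$ via the bijection $i \mapsto F_i$, and the incidence $i \in M_j$ is recorded by $j \in F_i$, i.e.\ by the vertex $j$ of $\MM$ lying in the facet $F_i$. Thus from $\MM$ alone one recovers each minimal non-face $M_j = \set{i \with j \in F_i}$ as a subset of the vertex set, and hence $\de$ itself, since a complex is determined by its minimal non-faces.

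I expect the only real obstacle to be the derivation $\cc{M_j} \in \LL$: this is exactly where the sphere (Gorenstein, self-dual resolution) hypothesis enters through Proposition~\ref{satz:lcm}, and once it is available the incomparability of the $F_i$ is a short set-theoretic computation. One should also take a little care with degenerate cases — a vertex contained in no minimal non-face would have $F_i = \emptyset$, but this is ruled out both by point-separatedness (two such vertices cannot be separated) and by the sphere condition (a single such vertex would make $\de$ a cone).
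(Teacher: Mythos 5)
Your proposal is correct and follows essentially the same route as the paper: you use Proposition~\ref{satz:lcm} via $\hh{-1}{M}=1$ to conclude $\cc{M}\in\LL$ for each minimal non-face $M$, derive from this that the $F_i$ form an antichain (hence are the facets), get distinctness from point-separation, and reconstruct $\de$ from $\MM$ by the same incidence correspondence. The only (harmless) additions are your explicit remarks that every face of $\MM$ lies in some $F_\ell$ and that no $F_i$ is empty, which the paper leaves implicit.
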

\begin{proof}
We will first prove the second claim. Clearly the facets of \MM are among the $F_i$. To prove the converse, assume there is an $i$ such that $F_i$ is not a facet. Then there is a $j$ with $F_i \subsetneq F_j$, so every minimal non-face containing $i$ also contains $j$ and there exists a minimal non-face $M$ containing $j$ but not $i$. 
Note that $(\emptyset, M)_{\LL} = \emptyset$ and hence $\hh{-1}{M} = 1$, so by \ref{satz:lcm} we have $\cc{M} \in \LL$.
Now $i$ is in $\cc{M}$, so there is a minimal non-face $M^\prime \subset \cc{M}$ containing $i$, which is a contradiction. So, every set $F_i$ is a facet of $\MM$. If $F_i = F_j$ for $i \neq j$, then a minimal non-face contains $i$ if and only if it contains $j$. Hence \de is not unsuspended.

Now let us describe how we can read off $\de$ from $\MM$: Every vertex of \MM corresponds to a minimal non-face of \de and we proved above that every facet of \MM corresponds to a vertex of $\de$. Furthermore, the following can be seen from \eqref{eq:Fi}: A minimal non-face $M_i$ contains a vertex $v_j$ of \de if and only if the vertex $i$ of \MM corresponding to $M_i$ is contained in the facet $F_j$ corresponding to the vertex $v_j$. Thus the minimal non-faces of \de are determined by $\MM$ and hence so is $\de$.
\end{proof}
This correspondence allows us to switch freely between \de and $\MM$. For example, one consequence is that the fact that every minimal non-face contains at least two vertices translates to the statement that every vertex of \MM is contained in at least two facets. Furthermore, every two vertices in $\MM$ lie in a different set of facets, because every two minimal non-faces of \de have different sets of vertices. Another interpretation of \eqref{eq:Fi} is as follows: Consider the vertex-incidence matrix of the minimal non-faces of $\de$, i.e. the matrix $A_1$ with the rows labeled by the vertices of $\de$ and the columns labeled by the minimal non-faces of $\de$. The entry in the $i$th row and $j$th column is $1$ if the $i$th vertex is contained in the $j$th minimal non-face, and zero otherwise. Similarly, consider the vertex-incidence matrix $A_2$ of the facets of $\MM$. Now \eqref{eq:Fi} is equivalent to the statement $A_1 = A_2^t$, where the $t$ means transpose. In the sequel we will sometimes identify the minimal non-faces of \de with the vertices of $\MM$. 

Now we come to the main theorem of this section:
\begin{theorem}\label{satz:alphafinit}
For a given number $\alpha$, there are only finitely many unsuspended homology spheres \de with $\al = \alpha$.
\end{theorem}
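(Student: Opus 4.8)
The plan is to reduce the entire statement to a single bound on $m$, the number of minimal non-faces. By \ref{lemma:eindeutig} an unsuspended homology sphere \de is recovered up to isomorphism from its nerve \MM, and by \ref{prop:alphapos} we have $\dim \MM \leq \al$. As was observed just after the second lemma of Section~\ref{sec:elem}, the number $n$ of vertices of an unsuspended complex is bounded by a function of $m$; since moreover $d = n - m + \al$, the dimension is then controlled as well. Hence, once I show that $m$ is bounded by a function of $\al$, only finitely many complexes \MM (a complex of dimension at most $\al$ on at most $g(\al)$ vertices) can occur, and therefore only finitely many \de. So the whole theorem reduces to: \emph{for unsuspended homology spheres, $m \leq g(\al)$ for some function $g$.} It is harmless to assume \MM connected, since otherwise \de is a join, $\al$ is additive over the factors, and each non-trivial unsuspended factor contributes at least one unit of $\al$; thus there are at most $\al$ factors and it suffices to bound each.

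To attack $m$ I would feed \ref{satz:lcm} into the combinatorics of \MM. The most immediate input is this: for a minimal non-face $M$ the open interval $(\emptyset, M)_{\LL}$ is empty, so $\hh{-1}{M} = 1$; applying \ref{satz:lcm} with $i = 1$ gives $\hh{n-d-3}{\cc{M}} = \hh{-1}{M} = 1$, and in particular $\cc{M} \in \LL$. Thus \textbf{the complement of every minimal non-face is a union of minimal non-faces}. Read through the dictionary $M_i = \set{v \with i \in F_v}$ of \eqref{eq:Fi}, this becomes a clean local condition on \MM: a facet $F_v$ is contained in the closed neighbourhood $\set{i} \cup \set{j \with \set{i,j} \in \MM}$ of a vertex $i$ only if it already contains $i$. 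Equivalently, no facet lying entirely inside the neighbourhood of a vertex can avoid that vertex.

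The remaining and genuinely hard step is to convert this into a numerical bound on $m$, and I expect it to be the main obstacle. The local condition above is necessary but on its own far from sufficient: for example every cycle $\MM = C_m$ satisfies it, yet the associated \de is a homology sphere only for $m = 5$. So I would use \ref{satz:lcm} in its full strength, i.e. the symmetry $\hh{i-2}{S} = \hh{n-d-i-2}{\cc{S}}$ for all $S$ and all $i$, rather than merely the special consequence $\cc{M} \in \LL$. Concretely I would induct on $\al$: a vertex link $\lk[\MM]{i}$ has dimension at most $\al - 1$ and, through the interplay between links of \de and the nerve \MM described in Section~\ref{sec:elem}, ought to be governed by a lower-dimensional homology sphere of strictly smaller invariant, so that the inductive hypothesis bounds the number of vertices of \MM near each $i$; connectedness would then propagate a bound across all of \MM. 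The crux is verifying exactly this inductive step, namely that the Gorenstein symmetry forces these links to be spheres of controlled complexity and that the resulting local bounds assemble into a single global bound on $m$; said differently, the difficulty is pinning down how much more the full duality constrains the size of \MM beyond the single identity $\cc{M} \in \LL$.
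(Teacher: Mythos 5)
Your reduction of the theorem to the single claim ``$m$ is bounded by a function of $\alpha$'' is correct and matches the last step of the paper's proof (via the point-separating criterion, an unsuspended complex with $m$ minimal non-faces has at most $2^m$ vertices and is determined by them). You also correctly extract from \ref{satz:lcm} that $\hh{-1}{M}=1$ and hence $\cc{M}\in\LL$ for every minimal non-face $M$. But the step you yourself flag as ``the remaining and genuinely hard step'' is exactly the content of the theorem, and your proposed substitute --- an induction on $\al$ in which vertex links of \MM are ``governed by a lower-dimensional homology sphere of strictly smaller invariant'' --- is left entirely unverified and is not how the argument goes; there is no evident correspondence between $\lk[\MM]{i}$ and the nerve of a smaller homology sphere. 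So the proposal has a genuine gap.

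The missing idea is a \emph{quantitative} use of the duality, not just the qualitative statement $\cc{M}\in\LL$. The paper's Lemma \ref{lemma:alphaschneiden} argues: since $\hh{n-d-3}{\cc{M}}=\hh{-1}{M}=1$, the order complex of $(\emptyset,\cc{M})_{\LL}$ has nonvanishing homology in degree $n-d-3$, hence contains a face of that dimension, i.e.\ a chain of $n-d-2$ elements; and since that order complex is not a cone there are at least two elements of rank $n-d-2$, forcing $\cc{M}$ to contain at least $n-d-1$ distinct minimal non-faces. Consequently $M$ meets at most $m-1-(n-d-1)=\al$ other minimal non-faces: every vertex of the graph of \MM has degree at most $\al$. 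This is precisely the strengthening of ``$\cc{M}$ contains a minimal non-face'' that your cycle example shows you need. Combined with the elementary blocking-set bound that the matching number of that graph is at most $\al$ (Lemma \ref{prop:independent}) and the fact that unsuspendedness rules out isolated vertices, a maximal matching dominates the graph and $m\leq 2\al+(2\al)\cdot\al$, which completes the proof. No induction on $\al$ is needed.
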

The condition that \de is unsuspended is obviously necessary for the theorem to hold, since one can easily construct infinite families of homology spheres with the same \al using the one- and two-point suspensions. However it is not clear if the assumption that \de is a homology sphere is needed. We found the following example of an infinite family of manifolds with boundary which share the same $\alpha$, but the author does not know if the theorem holds for manifolds without boundary.
\begin{example}
Let $\de$ be the complex obtained from the boundary complex of the $k$-th cross-polytope by removing one facet. The resulting complex has $k$ one-dimensional minimal non-faces and one $(k-1)$-dimensional one, the removed facet. For every value of $k$, we have $\alpha(\de) = (k+1) - (2k - k) = 1$. So this is an example of an infinite family of unsuspended simplicial complexes with $\alpha = 1$.
\end{example}
\noindent To prove \ref{satz:alphafinit} we need two lemmata:
\begin{lemma} \label{prop:independent}
Let \de be a simplicial complex and let $G$ denote the graph (i.e. the $1$-skeleton) of $\MM$. Then the maximal size of a set of pairwise non-adjacent edges of $G$ (i.e. the \emph{matching number} of $G$) is at most \al.
\end{lemma}
\begin{proof}
Choose a set of $k$ pairwise non-adjacent edges of $G$. This corresponds to a family of pairs $(M_{1a}, M_{1b}), \hdots , (M_{ka}, M_{kb})$ of different minimal non-faces of $\de$ with $M_{ia} \cap M_{ib} \neq \emptyset$ for every $i$. For every such pair choose a vertex $v_i \in M_{ia} \cap M_{ib}$ of $\de$. Further, for every minimal non-face $M$ of \de which does not appear in the above list choose an additional vertex $v \in M$. This way we get a collection of $k + (m-2k) = m-k$ vertices of \de which has a non-empty intersection with every minimal non-face, thus it is a blocking set. This yields $d \geq n - (m-k)$ and therefore $k \leq m-n+d = \al$. 
\end{proof}
\begin{lemma} \label{lemma:alphaschneiden}
Let \de be homology sphere. Every minimal non-face of \de intersects at most \al other minimal non-faces nontrivially. Equivalently, the degree of every vertex of the graph of \MM is at most $\al$.
\end{lemma}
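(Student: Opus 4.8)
The plan is to fix a minimal non-face $M = M_i$ and translate the claimed degree bound into a lower bound on the number of minimal non-faces \emph{disjoint} from $M$, which I then extract from Proposition \ref{satz:lcm} together with a standard bound on the length of the lcm-lattice. Write $G$ for the graph of $\MM$ and let $D$ be the set of minimal non-faces $M_j$ with $M_j\cap M = \emptyset$, equivalently $M_j\subseteq\cc{M}$. The $m$ minimal non-faces split into $M$ itself, the $\deg_G(i)$ neighbours of $i$ in $G$, and the members of $D$, so $\deg_G(i) = m-1-|D|$. Since $\al = m-(n-d)$, a one-line computation shows that $\deg_G(i)\le\al$ is equivalent to $|D|\ge n-d-1$, so it suffices to prove the latter.

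First I would note that $(\emptyset,M)_{\LL}=\emptyset$: any element of $\LL$ below $M$ is a union of minimal non-faces contained in $M$, and since the minimal non-faces form an antichain the only non-empty such union is $M$ itself. Hence $\hh{-1}{M}=1$, and Proposition \ref{satz:lcm} with $i=1$ gives $\hh{n-d-3}{\cc{M}}=1$; in particular $\cc{M}\in\LL$. Moreover the elements of $\LL$ lying below $\cc{M}$ are exactly the unions of minimal non-faces contained in $\cc{M}$, that is, of the members of $D$, and their total union is $\cc{M}$ itself. Thus the interval $[\emptyset,\cc{M}]_{\LL}$ coincides as a poset with the lcm-lattice $\LL[\Gamma]$ of the induced subcomplex $\Gamma\defa\de_{\cc{M}}$, whose minimal non-faces are precisely the $|D|$ sets in $D$.

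Since $\hh{\ell}{S}$ depends only on the open interval below $S$, this identification yields $\hh{n-d-3}{\cc{M}}=\dim\tilde H_{n-d-3}((\emptyset,\cc{M})_{\LL[\Gamma]},k)$. The decisive point is that this homology can be non-zero only in low degrees: the Stanley--Reisner ideal of $\Gamma$ has $|D|$ generators, so by the Taylor resolution its projective dimension is at most $|D|$, and hence, via the equality $\beta_{j,S}=\hh{j-2}{S}$ used in the proof of Proposition \ref{satz:lcm}, one has $\hh{\ell}{S}=0$ for every $\ell>|D|-2$. Combining this with $\hh{n-d-3}{\cc{M}}=1\neq 0$ forces $n-d-3\le|D|-2$, i.e. $|D|\ge n-d-1$, which is exactly what was needed.

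The main obstacle is the reduction in the second paragraph: one must see that passing to the complement via Proposition \ref{satz:lcm} not only produces non-vanishing homology in the precise degree $n-d-3$, but also confines the entire computation to the interval below $\cc{M}$, so that only the $|D|$ minimal non-faces disjoint from $M$ — rather than all $m$ of them — control the relevant Betti numbers. Applying the projective-dimension bound globally would only recover $\al\ge 0$; the whole gain comes from restricting to $\Gamma$. I would also check the degenerate cases ($\cc{M}=\emptyset$, i.e. $M=[n]$, and small $n-d$), where the inequality $n-d-3\le|D|-2$ persists and reproduces $\deg_G(i)\le\al$. Note that no appeal to the unsuspended hypothesis is required, since the argument runs for an arbitrary minimal non-face.
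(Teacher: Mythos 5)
Your proof is correct and follows essentially the same route as the paper: both apply Proposition \ref{satz:lcm} to obtain $\hh{n-d-3}{\cc{M}}=1$ and then deduce that $\cc{M}$ must contain at least $n-d-1$ minimal non-faces, which is equivalent to the degree bound. The paper extracts that last count either by a rank argument in the order complex of $(\emptyset,\cc{M})_{\LL}$ or by walking down the minimal free resolution; your finish via the Taylor resolution of the induced subcomplex $\de_{\cc{M}}$ is a clean variant of the latter.
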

\begin{proof}
The two formulations are clearly equivalent, since two vertices of \MM are connected by an edge if and only if the corresponding minimal non-faces intersect nontrivially.
Let $M$ be a minimal non-face of $\de$. Let $\mathcal O$ denote the order complex of $(\emptyset, \cc{M})_{\LL}$. As pointed out above we have $1 = \hh{-1}{M} = \hh{n-d-3}{\cc{M}}$ by \ref{satz:lcm}. Hence $\mathcal O$ has a nontrivial $(n-d-3)$-th homology. This implies that $\mathcal O$ has a $(n-d-3)$-dimensional face and hence $(\emptyset, \cc{M})_{\LL}$ contains an element of rank $n-d-2$. But since $\mathcal O$ is not a cone there cannot be only one element of a given rank, so there are at least two elements of rank $n-d-2$. These are different unions of (at least) $n-d-2$ minimal non-faces each, hence $\cc{M}$ contains at least $n-d-1$ minimal non-faces. So the maximal number of minimal non-faces intersecting with $M$ is $m - 1 - (n-d-1) = \al$.
\end{proof}
This lemma fails for general simplicial complexes. Since this is crucial for the extent to which \ref{satz:alphafinit} holds, we give a second proof, which is somewhat simpler but involves more algebra:
\begin{proof}[Second proof of \ref{lemma:alphaschneiden}]
Again let $M$ be a minimal non-face of $\de$. Consider the minimal free resolution of the Stanley-Reisner-Ring of $\de$. As above we have $1 = \hh{-1}{M} = \hh{n-d-3}{\cc{M}} = \beta_{n-d-1,\cc{M}}$ by \ref{satz:lcm}. So there is a component of the $(n-d-1)$-th syzygy whose grading is shifted by $\cc{M}$. In the resolution its generator cannot be mapped to zero because the resolution is minimal. Therefore it is mapped to a component of the $(n-d-2)$-th syzygy whose grading is shifted by a subset of $\cc{M}$. This subset is proper since the resolution is minimal and it is again a union of minimal non-faces. Going down the resolution we get a chain of subsets $\cc{M} = S_{n-d-1} \supsetneq S_{n-d-2} \supsetneq \hdots \supsetneq S_{1}$ with $\beta_{i, S_i} > 0$ for every $i$, so each set $S_i$ is an union of minimal non-faces. Therefore, $\cc{M}$ contains at least $n-d-1$ minimal non-faces. Now the claim follows as above.
\end{proof} 
\begin{proof}[Proof of \ref{satz:alphafinit}]
Let $\de$ be an unsuspended homology sphere with $\alpha = \al$. Choose a maximal family of pairs $(M_{1a}, M_{1b}), \hdots , (M_{ka}, M_{kb})$ of different minimal non-faces of $\de$ with $M_{ia} \cap M_{ib} \neq \emptyset$ for every $i$. By \ref{prop:independent} we know $k \leq \al$. Since $\de$ is unsuspended $\MM$ has no isolated vertices. Hence every minimal non-face $M$ of $\de$ which is not contained in the above list has a non-empty intersection with at least one minimal non-face in that family. Otherwise we could choose a further minimal non-face $M'$ with $M\cap M' \neq \emptyset$ which contradicts the maximality of the family. Furthermore by \ref{lemma:alphaschneiden} every minimal non-face of the family intersects at most $\al$ other minimal non-faces non-trivially. So the number of minimal non-faces of \de is bounded by $2\alpha + (2\alpha)^{\alpha}$. As noted above, there are only finitely many simplicial complexes with at most that number of minimal non-faces.
\end{proof}

%
%
%
%
\section[Small values of Alpha]{Small values of $\alpha$} \label{sec:kleinalpha}
In this section, we determine the homology spheres with $\al \leq 4$ by examining the complex $\MM$. Since the effect of the one- and two-point suspension are well understood, we restrict ourselves to unsuspended homology spheres. If $P$ is a simplicial polytope then let $\bd{P}$ denote its boundary complex. For a given number $d$, we will denote with $P_d$ the polytope obtained from the $d$th cross-polytope by adding a pyramid over one of its facets. Note that $\al[\bd{P_d}] = d$. For example, $P_2$ is a pentagon.
The main result of this section is:
\begin{theorem} \label{satz:alpha0123}
There are no unsuspended homology spheres with $\alpha=0$ or $\alpha = 1$. For $\alpha = 2$ resp. $\alpha = 3$, the only unsuspended homology spheres are $\bd{P_2}$ resp. $\bd{P_3}$. For $\alpha = 4$, there are exactly five unsuspended homology spheres: $\bd{P_2} \join \bd{P_2}$, $\bd{P_4}$, the boundary complex of a cyclic $4$-polytope with seven vertices $C_4(7)$ and the boundary complexes of two additional $4$-polytopes $Q_1$ and $Q_2$, see Figure \ref{fig:Q1Q2}.
In particular, all homology spheres with $\al \leq 4$ are polytopal.
\end{theorem}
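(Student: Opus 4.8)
The plan is to carry out the whole classification inside the nerve $\MM$ and its $1$-skeleton $G$, passing back and forth to $\de$ via \ref{lemma:eindeutig}, and to enumerate the finitely many admissible $\MM$ for each $\alpha\le 4$. The first reduction is to assume $\MM$ connected. If $\MM$ is disconnected, then $\de=\de_{V_1}\join\de_{V_2}$, where point-separation is inherited by each factor (so each factor is again unsuspended) and each factor is itself a homology sphere (a join factor of a homology sphere is one); since $\alpha$ is additive under joins, it suffices to classify the join-irreducible cases and then reassemble. Because (as shown next) an unsuspended homology sphere has $\alpha\ge 2$, the only disconnected possibility with $\alpha\le 4$ is $\bd{P_2}\join\bd{P_2}$, corresponding to $2+2$.

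Throughout I use four constraints on $\MM$: every vertex lies in at least two facets (each minimal non-face has at least two vertices); $\dim\MM\le\alpha$ by \ref{prop:alphapos}; the maximal degree of $G$ is at most $\alpha$ by \ref{lemma:alphaschneiden}; and the matching number of $G$ is at most $\alpha$ by \ref{prop:independent}. For $\alpha=0$ the graph $G$ is edgeless, so the minimal non-faces are pairwise disjoint; but then two vertices of a common minimal non-face lie in exactly the same minimal non-faces, contradicting point-separation. For $\alpha=1$, connectedness together with maximal degree $\le 1$ forces $G$ to be a single vertex or a single edge, in either of which some vertex of $\MM$ lies in only one facet. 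Hence there are no unsuspended homology spheres with $\alpha\le 1$.

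For $2\le\alpha\le 4$ with $\MM$ connected, the bounds on maximal degree, matching number and dimension leave only finitely many, explicitly listable, candidate graphs $G$; the two-facets condition and $\dim\MM\le\alpha$ then restrict the admissible higher faces, producing a short list of candidate nerves. For each I reconstruct $\de$ from \eqref{eq:Fi} — the facets of $\MM$ are the vertices of $\de$, and $M_i$ is the set of facets containing the vertex $i$ of $\MM$ — and test whether $\de$ is a homology sphere. The decisive tool is \ref{satz:lcm}: the duality $\hh{i-2}{S}=\hh{n-d-i-2}{\cc{S}}$, and especially the closure condition that $\hh{i-2}{S}>0$ forces $\cc{S}\in\LL$, is violated by most candidates and eliminates them, exactly as in the proof of \ref{lemma:eindeutig}. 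For $\alpha=2$ this forces $G$ to be a cycle $C_\ell$ with $\ell\le 5$ (paths fail the two-facets condition at their endpoints, and a $2$-face would make $\MM$ a single simplex with too few facets); reconstructing $\de$ as the flag complex of the complement of $C_\ell$ singles out $\ell=5$ and $\de=\bd{P_2}$. The cases $\alpha=3,4$ run along the same lines, now admitting genuine $2$- and $3$-faces in $\MM$, and yield $\bd{P_3}$ and the five listed complexes respectively.

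For the final clause, each surviving complex is recognised as the boundary of an explicit polytope ($\bd{P_2},\bd{P_3},\bd{P_4},C_4(7),\bd{Q_1},\bd{Q_2}$, and the free sum $\bd{P_2}\join\bd{P_2}$), hence is polytopal; since by \ref{lemma:dprime} every homology sphere with $\alpha\le 4$ arises from an unsuspended one by one- and two-point suspensions, which preserve both $\alpha$ and polytopality, all such spheres are polytopal. I expect the main obstacle to be the $\alpha=4$ enumeration: many graphs $G$ and many admissible face-sets on top of them survive the crude degree, matching and dimension tests, so the duality of \ref{satz:lcm} must be applied systematically to prune the list down to exactly five, while the two less familiar survivors $Q_1$ and $Q_2$ must be identified correctly and one must check that no candidate has been overlooked.
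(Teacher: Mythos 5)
Your overall strategy --- classify the nerve \MM, reconstruct \de via \ref{lemma:eindeutig} and \eqref{eq:Fi}, and prune candidates with \ref{satz:lcm} --- is the same as the paper's, and your treatment of $\alpha=0,1,2$ is correct and essentially complete (the paper dispatches $\alpha\le 1$ slightly differently, via the fact that disjoint minimal non-faces force a join of boundaries of simplices, but your degree and point-separation arguments work, and your $C_\ell$ analysis for $\alpha=2$ is sound).

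The gap is in the cases $\alpha=3$ and $\alpha=4$, which is where the content of the theorem lies. You claim that the bounds on maximal degree, matching number and dimension leave "only finitely many, explicitly listable, candidate graphs" and that these cases "run along the same lines" as $\alpha=2$; but the crude bounds do not produce a short list. A connected graph with maximum degree $4$ and matching number $4$ can have on the order of $30$ vertices (the count in the proof of \ref{satz:alphafinit} only bounds the number of minimal non-faces by $2\alpha+(2\alpha)^{\alpha}$), so "systematically applying \ref{satz:lcm}" to every admissible nerve of that size is not a feasible hand argument, and you give no mechanism that reduces it to one. The paper supplies exactly this mechanism: \ref{prop:dimmm2} sharpens $\dim\MM\le\alpha$ to $\dim\MM\le\alpha-1$ and shows that equality forces $\de=\bd{P_{\alpha}}$, while \ref{prop:dimmm} shows that $\dim\MM=1$ forces every join-component to be $\bd{P_2}$; together these settle $\alpha=3$ with no enumeration at all and reduce $\alpha=4$ to the single case $\dim\MM=2$, where \ref{lemma:ksimplex} and the degree bound of \ref{lemma:alphaschneiden} confine the analysis to the configurations around one vertex of degree exactly four. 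Without results of this kind (or an equivalent reduction), your plan for $\alpha\ge 3$ is an assertion rather than a proof; you would at minimum need to establish the sharper dimension bound and the two extremal characterizations before the remaining enumeration becomes tractable.
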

\begin{figure}
\includegraphics[scale=0.8]{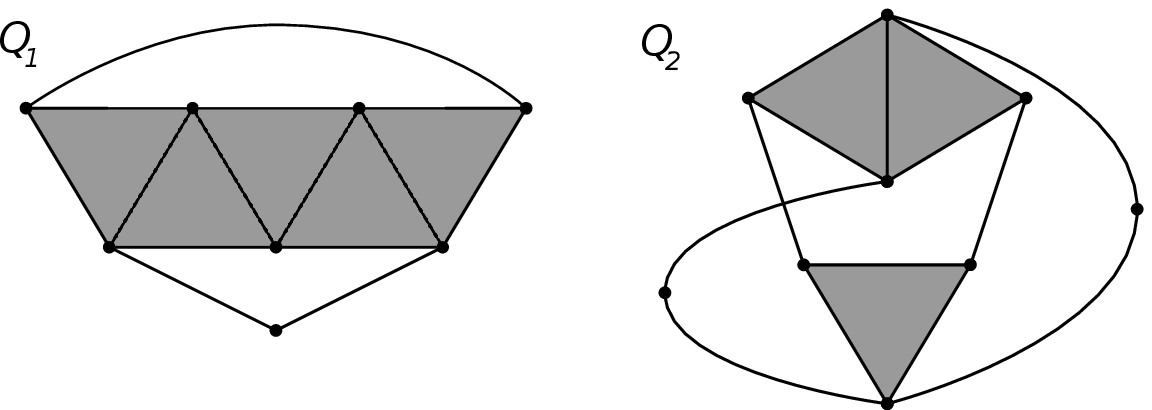}
\caption{The complexes $\MM[\bd{Q_1}]$ and $\MM[\bd{Q_2}]$}
\label{fig:Q1Q2}
\end{figure}
As a corollary, we get an enumeration of the unsuspended homology spheres with a small absolute number of minimal non-faces:
\begin{corollary}\label{cor:8mns}
Every join-irreducible homology sphere with at most eight minimal non-faces is one of the following: The boundary of a simplex, the boundary of a pentagon, $\bd{C_4(7)}$, $\bd{Q_1}$ or an (possibly iterated) one-point suspension of one of them. In particular all these homology spheres are polytopal.
\end{corollary}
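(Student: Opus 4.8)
The plan is to bootstrap from the classification in \ref{satz:alpha0123}. First I would strip off the suspensions. By \ref{lemma:dprime} every homology sphere $\de$ is built from a unique unsuspended $\de'$ by iterated one- and two-point suspensions; moreover $\sus$ leaves $\MM$ unchanged, whereas $\tsus$ merely adjoins an isolated vertex to $\MM$, being the join with the single non-face $\set{w,w'}$. Since $\de$ is join-irreducible exactly when $\MM$ is connected, no $\tsus$ can have been used once $\de$ has at least two minimal non-faces: any $\tsus$ leaves behind an isolated vertex of the nerve that no later operation can reconnect. The one exception is the degenerate core $\de'=\set{\emptyset}$, which under a single $\tsus$ followed by one-point suspensions reproduces precisely the boundaries of simplices ($m=1$). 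Thus, apart from the simplex family, $\de$ is an iterated one-point suspension of an unsuspended join-irreducible homology sphere $\de'$ carrying the same number $m\le 8$ of minimal non-faces and the same value of $\al$. It therefore suffices to enumerate the unsuspended join-irreducible homology spheres with $m\le 8$.

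Next I would show $\al\le 4$. For an unsuspended join-irreducible homology sphere one always has $n-d\ge 3$: if $n-d=1$ then $\de$ is a simplex boundary, which is not unsuspended, and if $n-d=2$ then $\de$ is a join of two simplex boundaries, so $\MM$ is disconnected and $\de$ is not join-irreducible. Now suppose $\al\ge 5$. Then $n-d=m-\al\le 8-5=3$, forcing $n-d=3$ and hence $m=8$, $\al=5$. But this is exactly the triple ruled out by the classification of homology spheres with $n-d\le 3$ (the auxiliary result of Section \ref{sec:app}): for $n-d=3$ the largest value of $\al$ that occurs is $4$, attained by $\bd{C_4(7)}$. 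Hence $\al\le 4$, and \ref{satz:alpha0123} becomes applicable.

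It then remains to extract, from the finite list of \ref{satz:alpha0123}, the complexes that are join-irreducible with at most eight minimal non-faces. I would first discard $\bd{P_2}\join\bd{P_2}$, which is a join and so has disconnected nerve, and then count minimal non-faces of the remaining complexes via \ref{lemma:eindeutig}, by which $m$ equals the number of vertices of the nerve (for $Q_1,Q_2$ read off Figure \ref{fig:Q1Q2}). The polytopes $P_4$ and $Q_2$ give $m=9$ and drop out, while the pentagon, $\bd{C_4(7)}$ and $\bd{Q_1}$ survive with $m\le 8$ (as does $\bd{P_3}$, with $m=7$). Together with the boundaries of simplices these are the base complexes, and every join-irreducible homology sphere with $m\le 8$ is an iterated one-point suspension of one of them. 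Polytopality is then immediate: each base complex is a polytope boundary, and since, by the remark following \ref{lemma:dprime}, polytopality transfers between a complex and its unsuspended core, every sphere in the list is polytopal.

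The crux is the bound $\al\le 4$, and specifically the exclusion of the borderline case $n-d=3,\ m=8,\ \al=5$. The matching- and degree-bounds of \ref{prop:independent} and \ref{lemma:alphaschneiden} are of no use here: they bound $m$ only from above in terms of $\al$ and so cannot prevent a sphere with few minimal non-faces from carrying a large $\al$; this is precisely why the separate classification of homology spheres with $n-d\le 3$ must be invoked. A secondary, purely computational obstacle is pinning down the number of minimal non-faces of $\bd{C_4(7)}$, $\bd{Q_1}$, $\bd{Q_2}$ and $\bd{P_4}$ from their nerves in order to decide which fall below the threshold $m\le 8$.
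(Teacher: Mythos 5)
Your argument follows the paper's proof of this corollary essentially verbatim in outline: for $m\le 8$ one has the dichotomy ``$n-d\le 3$ or $\al\le 4$'', the first branch being handled by the classification of Section~\ref{sec:app} and the second by \ref{satz:alpha0123}. The paper's own proof is only a two-sentence statement of this dichotomy, so your version is a correct and much more detailed elaboration: the reduction to the unsuspended core, the observation that join-irreducibility forbids any use of $\tsus$ except in the degenerate case producing simplex boundaries, and the exclusion of the borderline case $n-d=3$, $m=8$ are all sound. One small imprecision: your claim that ``for $n-d=3$ the largest value of $\al$ that occurs is $4$'' is false in general (larger cyclic polytopes give arbitrarily large $\al$); the borderline case actually dies because the appendix forces $m$ to be odd when $n-d=3$, so $m=8$ cannot occur. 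Since that is the same proposition you are citing, the argument still goes through.

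The genuine problem is that your enumeration does not match the statement being proved, and you pass over this in a parenthesis. You correctly observe that $\bd{P_3}$ survives your filter: it is unsuspended, join-irreducible, and has $n=7$, $d=3$, $\al[\bd{P_3}]=3$, hence $m=3+(7-3)=7\le 8$ minimal non-faces. It cannot be an iterated one-point suspension of anything on the list, since one-point suspension preserves $m$ and the only listed complex of comparable size, the pentagon, has $m=5$. So by your own reasoning $\bd{P_3}$ and its one-point suspensions belong in the conclusion, yet the corollary omits them. A proof of the corollary \emph{as stated} would have to explain why $\bd{P_3}$ is excluded, and yours does not --- nor could it, since the exclusion appears to be an omission in the statement rather than a flaw in your computation. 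You should either flag this discrepancy explicitly or record the corrected list of base complexes; as written, your proof ends by asserting a list that differs from the one in the corollary without comment.
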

\begin{proof}
For an unsuspended homology sphere with $m \leq 8$ minimal non-faces either $n-d \leq 3$ or $\al = m - (n-d) \leq 4$. The case $n-d \leq 3$ will be proved in the Appendix, see Section \ref{sec:app}.
\end{proof}
\begin{Remark}
\begin{enumerate}
	\item Using a computer we found $21$ different unsuspended homology spheres with $\al = 5$ and $13$ different unsuspended homology spheres with nine minimal non-faces. There might be more in both cases.
	\item For $n-d = 4$, part of the above statement is that there are no homology spheres with $5$ minimal non-faces. This is a special case of  \cite[Theorem 3.2]{hibig}, where it is shown that a Gorenstein monomial ideal of projective dimension $3$ cannot be minimally generated by $5$ monomials.   
	\item Note that for \al up to four, the dimension of \de is always equal to $\al - 1$, but this is no longer true for $\al = 5$. It is obvious that the dimension can be smaller than $\al -1$, since there are many $2$- and $3$-polytopes other than $P_2$ and $P_3$. It is more difficult to see that the dimension can also be larger than $\al-1$. Kenji Kashiwabara constructed a five dimensional unsuspended homology sphere with $\al = 5$, see Figure \ref{fig:Kenjis}. %
	We found a polytopal realization of it. 
	\item Further properties all unsuspended homology spheres with $\al \leq 4$ share are that the diameter of the graph of $\MM$ is exactly two and that $\MM$ is isomorphic to the complex generated by the set of minimal non-faces. However, for both properties there are counterexamples with $\al = 5$.
	\item As mentioned above, all homology spheres with $\al \leq 4$ are polytopal. It might be interesting to ask for the smallest value of $\al$ for which there is a non-polytopal homology sphere. The smallest example of a non-polytopal sphere we know is the Barnette sphere \cite{Barnette197337} with $\al = 9$. Also, one might consider homology spheres which are not topological spheres. The examples we found have an $\alpha$ around 300.
\end{enumerate}
\end{Remark}
\begin{figure}
\includegraphics[scale=1.2]{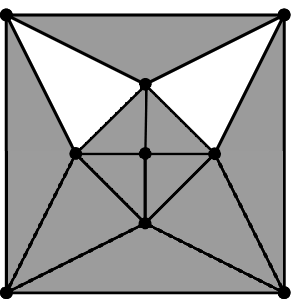}
\caption{The \MM of Kashiwabara's example}
\label{fig:Kenjis}
\end{figure}
To prove \ref{satz:alpha0123}, we need some preparations. 
Let us describe a certain way to use \ref{satz:lcm}, which we found especially useful:
Let \de be an unsuspended homology sphere.
Consider the situation where $V$ is a set of vertices $v_1,\hdots,v_l$ of $\MM$ and $F$ is a facet of $\MM$, such that $V \cap F = \emptyset$ and every vertex of $F$ shares an edge with some vertex of $V$.
We will denote this situation as \I{V}{F}.
Let $S \in \LL$ denote the union of the minimal non-faces $M_i$ of \de corresponding to the elements $v_i $ of $V$. 
The vertex $v_F$ of $\de$ corresponding to $F$ lies in $\cc{S}$, but every minimal non-face containing $v_F$ intersects a minimal non-face contained in $S$ non-trivially.
In this situation, $\cc{S} \notin \LL$, so $\hh{i}{S} = 0$ for every $i$. In particular, either one of the minimal non-faces $M_i$ is contained in the union the other minimal non-faces, or there is an additional minimal non-face contained in $S$. This translates to the following statement about $\MM$: In the first case, one of the vertices of $V$, say $v_1$, has the property that every facet containing $v_1$ contains also another vertex in $V$. In the second case, there is an vertex $w \notin V$ of $\MM$, such that every facet containing $w$ contains also a vertex in $V$.
In the sequel, we will identify the vertices of $\MM$ with the minimal non-faces of $\de$.
\begin{lemma}\label{lemma:ksimplex}
Let \de be an join-irreducible unsuspended homology sphere. Assume that the complex \MM contains a $k$-simplex $T$ and every vertex of $T$ is connected to the rest of the complex only via one edge. Then $\de = \bd{P_k}$.
\end{lemma}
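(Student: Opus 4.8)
The plan is to argue entirely on the combinatorial side: by Proposition~\ref{lemma:eindeutig} the complex \de is determined by $\MM$, and the facets of $\MM$ are the sets $F_x=\set{j\with x\in M_j}$ as $x$ ranges over the vertices of \de. Write $T=\set{v_1,\dots,v_k}$ for the vertices of the $k$-simplex, let $M_1,\dots,M_k$ be the corresponding minimal non-faces, and let $w_i$ be the unique vertex outside $T$ adjacent to $v_i$. The observation I would use throughout is that the neighbours of $v_i$ in the graph of $\MM$ are exactly $\set{v_j\with j\neq i}\cup\set{w_i}$; since any two vertices lying in a common face are adjacent, every facet of $\MM$ containing $v_i$ is contained in $T\cup\set{w_i}$.

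First I would show that $T$ is itself a facet. As $T$ is a face, the $M_i$ have a common vertex $x\in\bigcap_i M_i$, and the corresponding facet satisfies $T\subseteq F_x\subseteq T\cup\set{w_i}$ for every $i$. Hence $F_x=T$ unless all $w_i$ coincide in a single vertex $w$ and $F_x=T\cup\set{w}$; but then every facet through $v_i$ is contained in $T\cup\set{w}=F_x$ and therefore equals $F_x$, so that $v_i$ lies in only one facet, contradicting that every vertex of $\MM$ lies in at least two facets. Thus $T=F_p$ for a unique apex $p$ and $\bigcap_i M_i=\set{p}$. Each $v_i$ now needs a second facet, which must contain $w_i$. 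Here the essential point is that the $w_i$ are pairwise distinct: granting this, $w_i$ is nonadjacent to every $v_j$ with $j\neq i$, so the second facet is the edge $\set{v_i,w_i}$; consequently $v_i$ lies in exactly the two facets $T$ and $\set{v_i,w_i}$, and $M_i=\set{p,y_i}$ is a two-element minimal non-face.

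To locate the remaining structure I would invoke the technique \I{V}{F} with $V=\set{w_1,\dots,w_k}$ and $F=T$: the hypotheses hold since $V\cap T=\emptyset$ and each $v_i$ is adjacent to $w_i\in V$. The first alternative is impossible, because the leg $\set{v_i,w_i}$ is a facet through $w_i$ containing no other $w_j$; so the second alternative produces a vertex $C\notin V$ --- necessarily different from $p$ and from every $v_i$, since not all of their facets meet $\set{w_1,\dots,w_k}$ --- all of whose facets meet $\set{w_1,\dots,w_k}$. Repeating the same kind of analysis, now with the degree bound of Lemma~\ref{lemma:alphaschneiden}, the connectivity of $\MM$ coming from join-irreducibility, and a further application of Proposition~\ref{satz:lcm}, I would show that $C$ is the only vertex left, that its facets are precisely the $k$ edges $\set{w_i,C}$, and that no higher faces occur. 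Then $\MM$ equals $\MM[\bd{P_k}]$, and Proposition~\ref{lemma:eindeutig} yields $\de=\bd{P_k}$.

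The hard part will be the distinctness of the $w_i$ together with the final collapse of the outer part to a single apex $C$: one must exclude all larger faces among the sets $T\cup\set{w_i}$ and all extraneous vertices. This is exactly where being a homology sphere enters, through Proposition~\ref{satz:lcm}: if $w_i=w_j$, or if an unwanted face were present, one produces a union $S$ of minimal non-faces with $\hh{i-2}{S}>0$ whose complement $\cc S$ is not a union of minimal non-faces, contradicting the self-duality of the resolution. (For $k=2$ this is precisely the obstruction that rules out the three-point complex and forces the pentagon.) The systematic use of this constraint, conveniently repackaged as \I{V}{F}, is what eliminates every configuration other than $\bd{P_k}$.
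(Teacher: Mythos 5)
Your setup is sound and follows the paper's strategy: the reduction to determining $\MM$ via \ref{lemma:eindeutig}, the observation that every facet through $v_i$ lies in $T\cup\set{w_i}$, the conclusion that $T$ and the legs $\set{v_i,w_i}$ are facets, and the application of \I{\set{w_1,\dots,w_k}}{T} to produce the extra vertex $C$ all match (and in places tidy up) the published argument. The problem is that the two steps you explicitly defer are not routine "repetitions of the same analysis" --- they are the actual content of the lemma, and each requires a specific, non-obvious application of \ref{satz:lcm}. First, the pairwise distinctness of the $w_i$, which your whole chain is conditional on, is obtained in the paper from \I{v_i}{\set{v_j,w_j}}: if $w_i=w_j$ then every vertex of the facet through $\set{v_j,w_j}$ is adjacent to $v_i$, forcing $\cc{M_i}\notin\LL$ and contradicting $\hh{-1}{M_i}=1$. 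Moreover you never address the pairwise \emph{disjointness} of the minimal non-faces $M_{w_i}$ (equivalently, that $w_i$ and $w_j$ are non-adjacent in $\MM$), which the paper proves via \I{\set{v_i,w_i}}{\set{v_j,w_j}} and which is indispensable for your claimed final facet list $\set{w_i,C}$.

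Second, the endgame is not a formality. The paper shows $M_{w_i}\subseteq M_{v_i}\cup M_C$ for every $i$ by applying \I{\set{v_i,C}}{\set{v_0,w_0}} and observing that the resulting minimal non-face meets both $M_{v_i}$ and $M_C$ and therefore must \emph{be} $M_{w_i}$; it then excludes any further minimal non-face $E$ by using connectivity to force $E\cap M_{w_i}\neq\emptyset$ for some $i$, deducing $M_{w_j}\subseteq E\cup M_{v_j}$ for all $j$, and concluding $M_C\subseteq E$, a contradiction. None of this is captured by "repeating the same kind of analysis", and the degree bound of \ref{lemma:alphaschneiden} that you propose to use there plays no role in the actual proof and is not obviously usable, since $\al$ is not known in advance in this lemma. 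Until the distinctness/disjointness of the $w_i$ and the uniqueness and isolation of $C$ are actually carried out, configurations such as coinciding or adjacent $w_i$, a second $C$-like vertex, or faces strictly between the legs and $T\cup\set{w_i}$ have not been excluded, so the proposal as it stands does not prove the lemma.
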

\begin{proof}
For $0 \leq i \leq k$ let $A_i$ denote the vertices of the $k$-simplex and let $B_i$ be the vertex connected to $A_i$ via the edge. We will think about the $A_i$ and $B_i$ as vertices of $\MM$ and at the same time as minimal non-faces of $\de$. It is $B_i \neq B_j$ for $i \neq j$, because otherwise we get a contradiction with \I{A_i}{ \set{A_j,B_j}}. Similarly, $B_i \cap B_j = \emptyset$ for $i\neq j$ because of \I{\set{A_i,B_i}}{\set{A_j,B_j}}. By \I{\set{B_1, \hdots, B_k}}{\set{A_1, \hdots, A_k}} we know that there is an additional minimal non-face $C \subset B_1 \cup \hdots \cup B_k$. $C$ has a non-empty intersection with at least one of the $B_i$, say $B_0$. For every $i \neq 0$ it follows from \I{\set{A_i, C}}{\set{A_0, B_0}} that there is a minimal non-face $D_i \subset A_i \cup C$. Since $D_i$ intersects $A_i$ and $C$ nontrivially, it follows $D_i = B_i$. The case $i=0$ is analogous, once we know there is a second $B_j$ intersecting with $C$. So for every $i$, $B_i \subset A_i \cup C$. It remains to prove that there is no further minimal non-face, because then \MM and thus \de are uniquely determined. Assume, there is an additional minimal non-face $E$. Since \MM is connected and $C \subset B_1 \cup \hdots \cup B_k$, we may assume $E \cap B_i \neq \emptyset$ for an $i$. For $j \neq i$ follows from \I{\set{E, A_j}}{\set{A_i, B_i}} that there is a minimal non-face $F \subset E \cup A_j$. It is $F = B_j$, because otherwise $E$ would be one of the $B_i$. Hence $B_j \subset E \cup A_j$ for every $j$. But from this we get $C \subset B_0\setminus A_0 \cup \hdots \cup B_k\setminus A_k \subset E$, a contradiction.
\end{proof}
\begin{figure} 
\includegraphics{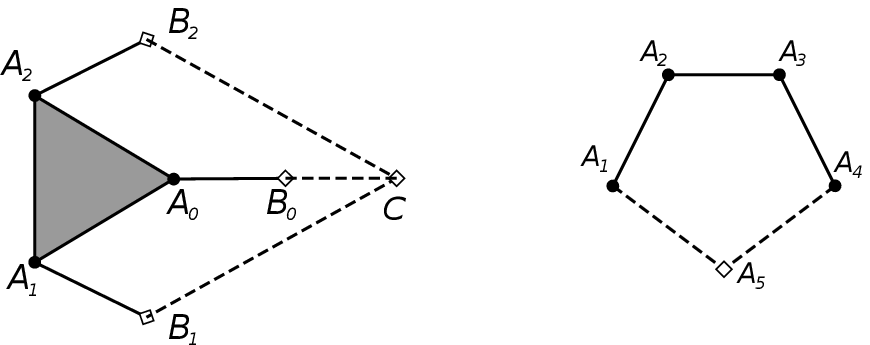}
\caption{The situation of \ref{lemma:ksimplex} for $k=2$ and the situation of \ref{prop:dimmm}}
\label{fig:ksimplex}
\end{figure}
\noindent Using this result we will now characterize the cases where the dimension of \MM is extremal:
\begin{proposition}\label{prop:dimmm}
Let \de be an unsuspended homology sphere. If \MM is one-dimensional, then every join-component of $\de$ is $\bd{P_2}$. 
\end{proposition}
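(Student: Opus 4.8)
The plan is to reduce to the join-irreducible case and then identify \de with the independence complex of a cycle. Since $\MM[\de\join\Gamma]$ is the disjoint union of \MM and $\MM[\Gamma]$, while \al and the homology-sphere property are inherited by the factors (a factor of a homology sphere is a homology sphere, and a restriction of point-separating minimal non-faces stays point-separating on each part), each join-component is again an unsuspended homology sphere whose nerve is a connected component of \MM. Because \de is unsuspended, \MM has no isolated vertices, so each such component carries an edge and is one-dimensional. Hence it suffices to show: if \de is join-irreducible, so that $G\defa\MM$ is a connected graph with at least one edge and no two-faces, then $\de=\bd{P_2}$, i.e. $G$ is a $5$-cycle.

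First I would translate everything into a statement about $G$. By \ref{lemma:eindeutig} the facets of $G$ are exactly the sets $F_i$ of \eqref{eq:Fi}; since $G$ is connected, one-dimensional and has no isolated vertices, every facet is an edge, so every vertex of \de lies in exactly two minimal non-faces and $\deg_G(M)=|M|\ge 2$ for each vertex $M$ of $G$. Thus $G$ is a connected graph on $m$ vertices with $n$ edges and minimum degree at least two; identifying minimal non-faces with vertices of $G$, the vertices of \de correspond to the edges of $G$, and a set of them is a face of \de precisely when the complementary set of edges covers every vertex of $G$.

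Next I would establish two structural facts. (i) $G$ is \emph{triangle-free}: if $M,M',M''$ were pairwise adjacent, then $\I{\set{M}}{\set{M',M''}}$ would hold, since $\set{M',M''}$ is a facet disjoint from $M$ whose endpoints are both adjacent to $M$; by the criterion preceding \ref{lemma:ksimplex} this forces $\hh{i}{M}=0$ for all $i$, contradicting $\hh{-1}{M}=1$ (as $(\emptyset,M)_{\LL}=\emptyset$). (ii) Every minimal non-face has size exactly two, i.e. $G$ is $2$-regular. For this I assume a vertex $M_0$ of degree $k\ge 3$; by triangle-freeness its neighbours $N_1,\dots,N_k$ are pairwise disjoint and $M_0\subseteq N_1\cup\dots\cup N_k$, and I would derive a contradiction by feeding a suitable configuration $\I{V}{F}$ built from the neighbourhood of $M_0$ into \ref{satz:lcm} (or, alternatively, by inducting on $n$ via the vertex link $\lk{v}$, which is again a homology sphere whose nerve is obtained from $G$ by deleting one edge and shrinking the two incident vertices). \textbf{This is the step I expect to be the main obstacle}: triangle-freeness and minimum degree two do \emph{not} force $2$-regularity, so the duality of \ref{satz:lcm} must be used essentially here. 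Indeed a graph such as $K_{2,3}$ satisfies all the purely combinatorial constraints, yet its associated complex has the wrong reduced Euler characteristic and so can only be excluded homologically.

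Once $G$ is known to be $2$-regular and connected it is a single cycle $C_\ell$, and the minimal non-faces, being the size-two edge-stars, form a cycle on the vertex set of \de; hence \de is exactly the clique complex of the complement of $C_\ell$, that is, the independence complex $\mathrm{Ind}(C_\ell)$, of dimension $\lfloor \ell/2\rfloor-1$. Its homotopy type is classical: a single sphere $S^{k-1}$ when $\ell=3k\pm 1$, and a wedge $S^{k-1}\vee S^{k-1}$ when $\ell=3k$. For \de to be a homology sphere the homotopy type must be a single sphere of dimension $\lfloor\ell/2\rfloor-1$, so the wedge cases are excluded and one needs $k-1=\lfloor\ell/2\rfloor-1$ with $\ell=3k\pm1$; this leaves only $\ell=5$. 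Then $\mathrm{Ind}(C_5)=C_5=\bd{P_2}$, completing the proof.
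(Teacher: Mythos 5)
Your overall architecture is sound and genuinely different from the paper's: the reduction to join-irreducible components, the observation that every facet of $\MM$ is an edge (so $\deg_G(M)=|M|$), and the triangle-freeness argument via $\I{\set{M}}{\set{M',M''}}$ are all correct, and the endgame identifying \de with $\mathrm{Ind}(C_\ell)$ and invoking the known homotopy type of independence complexes of cycles would indeed isolate $\ell=5$. But the proof does not go through as written, because the pivotal claim --- that $G$ is $2$-regular, i.e.\ that every minimal non-face has exactly two vertices --- is only announced, not proved, and you yourself flag it as the main obstacle. This is a genuine gap, not a routine verification: as you note, a configuration like $K_{2,3}$ (connected, triangle-free, $\deg_G(M)=|M|\geq 2$) survives every combinatorial constraint you have established, so something homological must be brought to bear, and it is not clear how to manufacture a configuration $\I{V}{F}$ around a degree-$3$ vertex $M_0$ --- the natural candidates for $F$ are the edges $\set{M_0,N_i}$, which are not disjoint from any useful choice of $V$. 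The alternative sketch via vertex links is also not immediate: when $v$ lies in two minimal non-faces $M_1,M_2$, the minimal non-faces of $\lk{v}$ are the minimal elements of $\set{M\setminus\set{v}}$, and $M_1\setminus\set{v}$ or $M_2\setminus\set{v}$ may absorb other minimal non-faces, so the nerve of the link is not simply ``$G$ with an edge contracted,'' and one would further have to argue that the link is again unsuspended with one-dimensional nerve.

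For comparison, the paper avoids ever proving $2$-regularity as an intermediate statement: starting from one edge $\set{A_2,A_3}$ it grows a path $A_1\text{--}A_2\text{--}A_3\text{--}A_4$ using $\cc{A_1},\cc{A_2}\in\LL$, then applies $\I{\set{A_1,A_4}}{\set{A_2,A_3}}$ to produce a fifth minimal non-face $A_5\subset A_1\cup A_4$, shows $A_4$ and $A_5$ each meet only two others, and finishes with \ref{lemma:ksimplex} applied to the edge $\set{A_4,A_5}$; uniqueness of the resulting configuration then forces the $5$-cycle. If you want to salvage your route, the most promising repair is to run the paper's path-growing argument \emph{locally} at a vertex of degree $\geq 3$ (the resulting $A_4,A_5$ meet only two minimal non-faces each, and \ref{lemma:ksimplex} then pins down all of \de as $\bd{P_2}$, whose nerve is $2$-regular) --- but at that point you have essentially reproduced the paper's proof and the independence-complex endgame becomes unnecessary.
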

\begin{proof}
It suffices to consider the case where $\de$ is join-irreducible. 
Choose an edge of $\MM$. It corresponds to two different minimal non-faces $A_1, A_2$ of $\de$ with $A_1 \cap A_2 \neq \emptyset$. Since there is a vertex $v \in A_2 \setminus A_1 \subset \cc{A_1}$ and $\cc{A_1} \in \LL$, there is a third minimal non-face $A_3 \subset \cc{A_1}$ with $v \in A_2 \cap A_3 \neq \emptyset$. Using the same argument we know that there is a fourth minimal non-face $A_4$ with $A_3 \cap A_4 \neq \emptyset$. Moreover, we have $A_1 \neq A_4$, because $A_1 \cap A_3 = \emptyset$. Since $\dim{\MM} = 1$, $A_2 \cap A_3$ is contained only in $A_2$ and $A_3$. Therefore it follows from \I{\set{A_1, A_4}}{ \set{A_2, A_3}} that there is a fifth minimal non-face $A_5 \subset A_1 \cup A_4$. If $A_5$ would intersect with an additional minimal non-face $B$, then $A_5 \cap B$ would not be contained in $A_1 \cup A_4$, so $A_5$ intersects only with these two nontrivially. In a similar way it follows from \I{\set{A_3, A_5}}{\set{A_1, A_2}} that $A_4 \subset A_3 \cup A_5$ and thus intersects only with these two. Now the claim follows from \ref{lemma:ksimplex} with the $1$-simplex $\set{A_4, A_5}$.
\end{proof}

\begin{proposition}\label{prop:dimmm2}
Let \de be a homology sphere with $\al > 0$. Then $\dim \MM \leq \al - 1$. If \de is unsuspended and $\dim \MM = \al - 1$, then $\de = \bd{P_{\al}}$
\end{proposition}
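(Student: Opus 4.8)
The plan is to prove the bound and the classification of the equality case separately, in each case reducing to a join-irreducible unsuspended complex and then analysing a top-dimensional simplex of \MM.

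For the inequality, note first that a one- or two-point suspension leaves \al unchanged and does not raise $\dim\MM$ (a one-point suspension fixes \MM, a two-point suspension only adjoins an isolated vertex), so I may assume \de is unsuspended. By the discussion after \ref{prop:alphapos}, if \MM is disconnected then \de splits as a join, with \al additive and $\dim\MM$ equal to the largest dimension of a join-component; since pieces $\de_i$ with $\al[\de_i]=0$ have $\dim\MM[\de_i]=0$ by \ref{prop:alphapos}, the inequality for \de follows once it is known for each join-irreducible piece with $\al[\de_i]\ge1$. Thus everything reduces to the claim: a join-irreducible unsuspended homology sphere with $\al\ge1$ satisfies $\dim\MM\le\al-1$. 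To prove this I argue by contradiction. If it fails then $\dim\MM=\al$ by \ref{prop:alphapos}, so \MM contains an $\al$-simplex $T$ whose vertices are minimal non-faces $A_0,\dots,A_\al$ sharing a common vertex. Each $A_i$ already meets the $\al$ other faces of $T$, and by \ref{lemma:alphaschneiden} meets at most \al minimal non-faces in total; hence $A_i$ meets no minimal non-face outside $T$. The vertices of $T$ therefore form a full connected component of the graph of \MM, and since \de is join-irreducible this graph is connected, forcing $\MM=T$. Then $m=\al+1$ and $n-d=m-\al=1$, so \de is a homology sphere with $n-d=1$; the only such complex is the boundary of a simplex, which has a single minimal non-face and hence $\al=0$, contradicting $\al\ge1$.

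For the equality case I again reduce to the join-irreducible situation. A nontrivial join-component of an unsuspended homology sphere is itself unsuspended and, carrying a minimal non-face, cannot have all its minimal non-faces pairwise disjoint (point-separation forces $\dim\MM\ge1$); so by the inequality just proved each such component $\de_i$ has $\al[\de_i]\ge2$. If \de were a genuine join we would get $\dim\MM=\max_i\dim\MM[\de_i]\le\max_i(\al[\de_i]-1)<\sum_i\al[\de_i]-1=\al-1$, contradicting $\dim\MM=\al-1$. Hence \de is join-irreducible. Now choose a top simplex $T=\{A_0,\dots,A_{\al-1}\}$ of \MM, of dimension $\al-1$, with common vertex $v$; by \ref{lemma:alphaschneiden} each $A_i$ meets at most \al minimal non-faces and already meets the $\al-1$ other vertices of $T$, so each $A_i$ is joined to the rest of \MM by at most one edge. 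If every $A_i$ is joined to the rest by exactly one edge, then \ref{lemma:ksimplex} applies to $T$ and gives $\de=\bd{P_{\al}}$.

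The main obstacle is therefore to exclude a \emph{buried} vertex, i.e.\ an $A_i$ — say $A_0$ — with no edge leaving $T$. Here \ref{lemma:eindeutig} provides the leverage: since every set $F_u$ is a facet of \MM, no vertex $u$ of \de can lie in $A_0$ alone, for then $\{A_0\}$ would be a facet of \MM although $A_0\in T$. As the only minimal non-faces meeting $A_0$ are $A_1,\dots,A_{\al-1}$, every vertex of $A_0$ lies in some $A_j$, that is $A_0\subseteq A_1\cup\cdots\cup A_{\al-1}$. I expect to turn this redundancy into a contradiction through the self-duality of \ref{satz:lcm} applied to $S=A_1\cup\cdots\cup A_{\al-1}$: a redundant generator $A_0$ of $S$ should degenerate the order complex of $(\emptyset,S)_{\LL}$ enough to break the Betti-number symmetry that a homology sphere must satisfy (equivalently, to contradict join-irreducibility together with $\dim\MM=\al-1$). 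Making this homological step precise is the crux; once buried vertices are ruled out, \ref{lemma:ksimplex} finishes the proof, the case $\al=2$ being exactly \ref{prop:dimmm}.
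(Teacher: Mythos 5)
Your proof of the inequality $\dim\MM\le\al-1$ is correct, and it takes a genuinely different route from the paper's: you use connectivity of $\MM$ for a join-irreducible complex to force the vertex set of $\MM$ to be a single $(\al+1)$-element clique, whence $m=\al+1$, $n-d=1$ and \de is the boundary of a simplex with $\al=0$. The paper instead observes that a facet on $\al+1$ vertices would be isolated by \ref{lemma:alphaschneiden}, so each of its vertices would lie in only one facet of $\MM$, contradicting the consequence of \ref{lemma:eindeutig} that every vertex of $\MM$ lies in at least two facets (because every minimal non-face has at least two vertices). Both arguments are sound, and your reduction of the equality case to a join-irreducible complex is also fine.

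The genuine gap is exactly where you flag it: excluding a buried vertex $A_0$ of the top simplex $T$. You reduce to $A_0\subseteq A_1\cup\dots\cup A_{\al-1}$ and then only \emph{propose} to extract a contradiction from \ref{satz:lcm} applied to $S=A_1\cup\dots\cup A_{\al-1}$, without carrying that step out; as written this is not a proof, and the homological detour is in fact unnecessary. The fact from \ref{lemma:eindeutig} that you already invoked does the whole job, applied to facets rather than to single vertices of \de: since \de is unsuspended, the facets of $\MM$ are exactly the sets $F_u$, and $F_u\ne F_{u'}$ for distinct vertices $u\ne u'$ of \de. The minimal non-face $A_0$ has at least two vertices $u\ne u'$, so $A_0$ lies in two distinct facets $F_u\ne F_{u'}$, at least one of which, say $S$, differs from the facet $T$; since facets are not contained in one another, $S$ contains a vertex of $\MM$ outside $T$, which is then adjacent to $A_0$. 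Hence no $A_i$ is buried, each has exactly one neighbour $w_i$ outside $T$ by the degree bound of \ref{lemma:alphaschneiden}, and this is precisely how the paper proceeds. A second, smaller omission: before invoking \ref{lemma:ksimplex} the paper proves that each edge $\{A_i,w_i\}$ is in fact a facet of $\MM$ (via an $\I{v_j}{S'}$ argument using again that distinct vertices of $\MM$ lie in distinct sets of facets); this is needed because the proof of \ref{lemma:ksimplex} repeatedly applies the $\I{V}{F}$ device to these edges, which requires them to be facets. You should supply that verification rather than passing directly from ``exactly one outgoing edge'' to \ref{lemma:ksimplex}.
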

\begin{proof}
Both the dimension of $\MM$ and \al are invariant under one- and two-point suspensions. Therefore it suffices to consider unsuspended homology spheres.
For general simplicial complexes we already proved $\dim \MM \leq \al$ in \ref{prop:alphapos}. Now if \de is an unsuspended homology sphere this bound can be improved as follows: Assume $\dim{\MM} = \al$, then $\MM$ has a facet $F$ with $\al+1$ vertices. Every one of these vertices shares an edge with every other vertex, so their degree is $\al$, the maximum possible value according to \ref{lemma:alphaschneiden}. But this implies that no vertex of $F$ shares an edge with a vertex not in $F$, so $F$ is isolated. Hence every vertex of $F$ is contained in only one facet of $\MM$. But this contradicts the remark below \ref{lemma:eindeutig}, because it would imply that the corresponding minimal non-faces have only one vertex.

For the second part, assume $\dim \MM = \al - 1$. We start with proving that $\de$ is join-irreducible. 
Assume $\de = \de_1 \join \de_2$ is not join-irreducible. We have $\al[\de_i] > 0$ for $i=1,2$, because $\al[\de_i] = 0$ implies that $\de_i$ is a join of boundaries of simplices, see the remark below \ref{prop:alphapos}. In this case $\de$ would fail to be unsuspended. Moreover, $\MM[\de] = \MM[\de_1] \cup \MM[\de_2]$, so $\dim \MM[\de] = \max \set{\dim \MM[\de_1], \dim\MM[\de_2]}$. Hence we may assume without loss of generality that $\dim \MM[\de_1] = \dim \MM[\de]$. But $\al[\de_1] = \al - \al[\de_2] < \al$, so we get $\dim \MM[\de_1] = \al - 1 > \al[\de_1] - 1$, a contradiction. Thus we conclude that $\de$ is join-irreducible.

Next choose an $(\al - 1)$-simplex $F$ in $\MM$ with vertices $v_1, \hdots, v_{\al}$. Every two vertices of $\MM$ lie in a different set of facets. Hence every $v_i$ is contained in at least one further facet. These facets contain vertices not in $F$. Therefore, there are vertices $w_1, \hdots, w_{\al}$ such that $v_i$ and $w_i$ share an edge. But now the degree of every $v_i$ is $\al - 1 + 1 = \al$, so $v_i$ cannot share an edge with an additional vertex. It suffices to prove that these edges are facets of $\MM$, because then we are in the situation of \ref{lemma:ksimplex}. 

To prove this assume that there is a $v_i$ such that the edge $\set{v_i, w_i}$ is not a facet but contained in some larger facet $S$ of $\MM$. The vertices of $S$ are a subset of the vertices of $F$ and $w_i$, because these are all vertices connected to $v_i$ by an edge. Let $v_j$ be another common vertex of $F$ and $S$. Since $v_i$ and $v_j$ lie in different sets of facets, there is another facet $S'$ containing $v_i$ and $v_j \notin S'$. Again, the vertices of $S'$ are a subset of the vertices of $F$ and $w_i$. But now we get a contradiction with $\I{v_j}{S'}$: The vertex $v_{S'}$ of $\de$ corresponding to $S'$ lies in the complement of the minimal non-face $M_j$ corresponding to $v_j$, but every minimal non-face containing $v_{S'}$ intersects $M_j$ nontrivially.
\end{proof}
\noindent We are now ready to prove \ref{satz:alpha0123} for $\al \leq 3$. The case $\al = 4$ is very technical, so we will only sketch the proof.
\begin{proof}[Proof of \ref{satz:alpha0123}]
If $\al = 0$, then \de is a join of boundary of simplices and thus never unsuspended. If $\al = 1$, then $\dim{\MM} \leq 0$ and \de is again a join of boundary of simplices, so it follows $\al = 0$, a contradiction. In the cases $\al = 2,3$, we may assume $\de$ to be join-irreducible, because \al is join-additive.  Now the claim follows from \ref{prop:dimmm} and \ref{prop:dimmm2}.

Now consider the case $\al = 4$. The dimension of \MM can be two or three. If it is three, then $\de = \bd{P_4}$, so we only need to consider the case $\dim\MM = 2$. If $\de$ can be written as a join then $\de = \bd{P_2}\join\bd{P_2}$, since $\al$ is join-additive. Therefore it suffices to consider join-irreducible complexes. Since $\dim\MM = 2$, we know that $\MM$ contains a $2$-simplex. One of its vertices is contained in four edges, because otherwise we are in the situation of \ref{lemma:ksimplex}. These edges can be part of other $2$-simplices. 
Now carefully considering all possible configurations of $2$-simplices and edges around the fixed vertex yields the result. Polytopal realizations of $Q_1$ and $Q_2$ were found computationally.
\end{proof}
%
%
%
%
%
\section{Appendix: Small codimension}\label{sec:app}
In this section we classify homology spheres with $n-d \leq 3$, thus completing the proof of \ref{cor:8mns}. For a start, if \de is a homology sphere with $n-d=1$, then \de is easily seen to be the boundary of a simplex. The next case needs some further reasoning:
\begin{proposition}
Let \de be a homology sphere with $n-d = 2$. Then \de is the join of the boundaries of two simplices. In particular, \de is not unsuspended.
\end{proposition}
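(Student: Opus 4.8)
The plan is to pin down the minimal non-faces exactly, using \ref{satz:lcm} in the only range where it is genuinely available here. Since $n-d=2$, the duality of \ref{satz:lcm} reads $\hh{i-2}{S} = \hh{-i}{\cc{S}}$, and the only value of $i\geq 1$ for which both sides are honest reduced homologies is $i=1$, giving $\hh{-1}{S} = \hh{-1}{\cc{S}}$. First I would record the elementary fact that $\hh{-1}{S}=1$ precisely when $S$ is an atom of $\LL$, i.e. a minimal non-face, and is $0$ otherwise: the open interval $(\emptyset,S)_{\LL}$ is empty exactly for atoms, and then its order complex is $\set{\emptyset}$ with nontrivial $\tilde H_{-1}$. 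Applying this to a minimal non-face $M$ yields $\hh{-1}{\cc{M}} = \hh{-1}{M} = 1$, so $\cc{M}$ is again a minimal non-face. Hence complementation is a fixed-point-free involution on the set of minimal non-faces; in particular $m$ is even and at least $2$ (the bound $m\geq n-d$). I would also note that a homology sphere has no cone vertex, so every vertex lies in some minimal non-face and $[n]=\bigcup_i M_i$ is the top of $\LL$.

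The heart of the argument is a connectivity count inside the top interval $(\emptyset,[n])_{\LL}$. On the one hand, self-duality of the resolution (as in the proof of \ref{satz:lcm}) gives $\hh{0}{[n]} = \beta_{n-d,[n]} = \beta_{0,\emptyset} = 1$, so the order complex of $(\emptyset,[n])_{\LL}$ has exactly two connected components. On the other hand, for two distinct minimal non-faces $M_i,M_j$ one has $M_i\cup M_j = [n]$ if and only if $\cc{M_i}\subseteq M_j$, which by minimality forces $M_j=\cc{M_i}$; thus for every non-complementary pair $M_i\cup M_j\subsetneq[n]$ is a common upper bound in the open interval, so $M_i$ and $M_j$ lie in the same component. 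Now suppose $m\geq 3$. Given any complementary pair $M,\cc{M}$, choose a third minimal non-face $M_k$; it is complementary to neither, so $M\sim M_k\sim\cc{M}$ and the two are in the same component. Every other atom is likewise joined to $M$, and every element of the interval is joined to an atom below it, so the whole order complex is connected and $\hh{0}{[n]}=0$, contradicting $\hh{0}{[n]}=1$. Therefore $m=2$.

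It remains to read off the structure. The two minimal non-faces form a complementary pair $M,\cc{M}$, so they are disjoint and partition $[n]$; the unique complex with exactly these two minimal non-faces is $\de = \bd{\sigma}\join\bd{\tau}$, the join of the boundaries of the simplices on $M$ and on $\cc{M}$. Since each block has at least two vertices (minimal non-faces have size $\geq 2$), two vertices lying in the same block are contained in exactly the same minimal non-faces, so the minimal non-faces are not point-separating and \de is not unsuspended, by the characterization following \ref{lemma:dprime}. The main obstacle I anticipate is the bookkeeping around \ref{satz:lcm}: one must resist applying the duality at $i=2$ (where the right-hand term $\hh{-2}{\cc{S}}$ is not the Betti number $\beta_{0,\cc{S}}$), and instead feed in the single extra value $\hh{0}{[n]}=1$ by hand from self-duality. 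Getting this boundary indexing right, together with the clean ``union equals $[n]$ iff complementary'' observation, is what makes the connectivity dichotomy go through.
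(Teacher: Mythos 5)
Your proof is correct, but it takes a genuinely different route from the paper after the common first step. Both arguments begin by applying \ref{satz:lcm} to a minimal non-face $M$ to conclude $\hh{-1}{\cc{M}} = \hh{-1}{M} = 1$, hence that $\cc{M}$ is again a minimal non-face. From there the paper finishes in one line of a different flavor: it forms the complex $\de'$ whose minimal non-faces are exactly $M$ and $\cc{M}$ (a join of boundaries of two simplices, hence a $(d-1)$-homology sphere), observes $\de \subset \de'$, and invokes the standard fact that a $(d-1)$-homology sphere cannot be a proper subcomplex of another one. You instead stay entirely inside the lcm-lattice framework: you compute $\hh{0}{[n]} = \beta_{2,[n]} = \beta_{0,\emptyset} = 1$ from self-duality and then show by the connectivity argument (non-complementary atoms have a common upper bound below $[n]$, and any third atom bridges a complementary pair) that $m \geq 3$ would force $(\emptyset,[n])_{\LL}$ to be connected, a contradiction; this pins down $m=2$ directly. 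Your route is longer but buys two things: it avoids the unproved sphere-containment fact, and it correctly navigates a genuine boundary subtlety in \ref{satz:lcm} as stated --- at $i = n-d$ the displayed identity would read $\hh{0}{[n]} = \hh{-2}{\emptyset} = 0$, which is wrong, so one must pass through $\beta_{n-d,[n]} = \beta_{0,\emptyset}$ as you do rather than quoting the proposition verbatim. The paper's route is shorter and arguably more geometric. Both are complete; the only cosmetic remark is that your appeal to the absence of cone vertices to get $[n] \in \LL$ is not needed, since $\beta_{2,[n]} = 1 > 0$ already forces $[n] \in \LL$ by the ``in particular'' clause of \ref{satz:lcm}.
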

\begin{proof}
Let $M$ be a minimal non-face of $\de$. Then by \ref{satz:lcm} we have $1 = \hh{-1}{M} = \hh{2-3}{\cc{M}}$. This implies $(\emptyset, \cc{M})_{\LL} = \emptyset$, so $\cc{M}$ is itself a minimal non-face. Let $\de'$ be the simplicial complex with the minimal non-faces $M$ and $\cc{M}$. This is the join of the boundaries of two simplices and hence a $(d-1)$-homology sphere. But $\de \subset \de'$ and both are homology spheres of the same dimension, so we have $\de = \de'$.
\end{proof}
The final case $n-d=3$ is more involved. It could be proved using the methods developed in this paper, but we derive it from an algebraic result in \cite{Kamoi}, since this yields a shorter proof. Note that this generalizes the well-known result of Mani \cite{Mani}, which states that every sphere with $n-d=3$ is polytopal.
\begin{proposition}
Let \de be an unsuspended homology sphere with $n-d = 3$. Then $m$ is odd, $n=m \geq 5$ and \de is the boundary complex of the cyclic polytope $C_{d}(n)$.
\end{proposition}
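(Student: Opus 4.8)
The plan is to pass to commutative algebra, read the structure of the minimal non-faces off the classification of squarefree Gorenstein ideals of codimension three, and then match that structure with the boundary of the cyclic polytope. For the setup: since \de is a homology sphere it is Gorenstein, so the Stanley--Reisner ring $k[\de]$ is a Gorenstein ring, and the hypothesis $n-d=3$ says that the Stanley--Reisner ideal $I_\de$ has codimension (height) three. Now $I_\de$ is the squarefree monomial ideal whose minimal generators are the monomials $\prod_{i\in M_j}x_i$, one per minimal non-face, so $m$ is the minimal number of generators of a codimension-three Gorenstein squarefree monomial ideal. By the Buchsbaum--Eisenbud structure theorem every such ideal is generated by the submaximal Pfaffians of an alternating matrix of odd size $2t+1$; in particular $m=2t+1$ is odd, which already gives the parity claim.

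Next I would invoke the classification of \cite{Kamoi}, which describes these ideals explicitly in the squarefree case. The output I want to extract is a cyclic normal form: after relabelling the vertices there is an $e\ge 1$ with $n=2e+3$ such that the minimal non-faces are exactly the $n$ arithmetic progressions
\[ M_i = \set{i,\, i+2,\, i+4,\, \hdots,\, i+2e} \pmod n, \qquad i=1,\hdots,n, \]
each of size $e+1$. Here the \emph{unsuspended} hypothesis does the selecting: by the criterion after \ref{lemma:dprime} the minimal non-faces are point-separating, which excludes the degenerate entries (doubled or cone vertices) in Kamoi's list and picks out the reduced representative, for which $n=m$. It also forces $\al>0$, since $\al=0$ would make \de a join of boundaries of simplices (the remark after \ref{prop:alphapos}) and hence suspended; as $\al=m-3$ and $m$ is odd, this yields $m\ge 5$. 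A direct check confirms the family $\set{M_i}$ is point-separating, since the vertex $a$ lies precisely in $M_a,M_{a-2},\hdots,M_{a-2e}$ and these index sets are distinct for distinct $a$; via \ref{lemma:eindeutig} one then sees $F_i=\set{i,i-2,\hdots,i-2e}$, so $\MM$ has $n$ vertices and $n$ facets, reconfirming $n=m$.

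Finally I would identify this normal form with $\bd{C_d(n)}$. The boundary of $C_{d}(n)$ with $d=n-3=2e$ is a simplicial $(d-1)$-sphere with $n-d=3$; it is $e$-neighborly, so its minimal non-faces have size $e+1$, and it carries the cyclic symmetry typical of even-dimensional cyclic polytopes. Using Gale's evenness condition to list the facets---equivalently the $(e+1)$-subsets that fail to be faces---one checks that, up to a rotation of the labels, these minimal non-faces are exactly the progressions $M_i$ above. Since a simplicial complex is determined by its set of minimal non-faces, this gives $\de=\bd{C_d(n)}$. The main obstacle is precisely this matching step: translating Kamoi's algebraic normal form into the concrete cyclic family of minimal non-faces, and then verifying through Gale's evenness condition that the very same family describes $\bd{C_d(n)}$. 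By contrast, the parity of $m$ and the bound $m\ge 5$ are comparatively immediate once the Gorenstein codimension-three structure is in place.
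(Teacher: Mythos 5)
Your plan follows essentially the same route as the paper's proof: both invoke Kamoi's classification of codimension-three squarefree Gorenstein ideals to get the parity of $m$ and the cyclic block structure of the minimal non-faces, use the point-separating criterion for unsuspendedness to force each block to be a single vertex (hence $n=m$) and to rule out the small case so that $m\ge 5$, and then verify $\de=\bd{C_d(n)}$ via Gale's evenness condition after the same relabelling of vertices by even positions. The only cosmetic differences are that you cite Buchsbaum--Eisenbud separately for the parity (which the paper absorbs into Kamoi's theorem) and derive $m\ge 5$ from $\al>0$ rather than from the observation that $m=3$ forces disjoint minimal non-faces; both variants are sound.
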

\begin{proof}
 The Stanley-Reisner-Ring of \de is known to be Gorenstein and the number $n-d$ is its codimension, so we are in the situation of Theorem 0.1 in \cite{Kamoi}. Let $s \defa (m+1)/2$ and $\nu: \NN \rightarrow [m]$ be map sending $i$ to $i$ modulo $m$, as in \cite{Kamoi}. The third equivalent condition in that Theorem states that $m$ is odd and there are pairwise coprime monomials $b_1, \hdots, b_m$ such that the Stanley-Reisner-Ideal is generated by the monomials $\prod_{j=1}^{s-1} b_{\nu(j+i)}$ for $1 \leq i \leq m$. These generators correspond to the minimal non-faces of $\de$, so if $B_i$ is the set of variables in $b_i$, the minimal non-faces of \de are
\[ M_i \defa \bigcup_{j=1}^{s-1} B_{\nu(j+i)}  \text{ for } 1 \leq i \leq m\]
The sets $B_i$ are pairwise disjoint, because the $b_i$ are pairwise coprime. So if $m=3$, the minimal non-faces are disjoint and thus \de is a join of the boundaries of three simplices. In this case \de fails to be unsuspended, so we have $m \geq 5$. 
Since \de is unsuspended, its minimal non-faces are point-separating. Hence every set $B_i$ contains exactly one element. Also, every vertex of \de is contained in at least one minimal non-face, because otherwise \de would be a cone. This implies $m=n$ and the minimal non-faces are now completely determined and so is $\de$. 

To prove that \de is indeed the boundary of $C_{d}(n)$ we use Gale's Evenness Condition, see \cite[Theorem 0.7]{zie}. We reorder the vertices of \de by assigning the position $\nu(2i)$ to the vertex in $B_i$. If $2(i+1) < m$ then the minimal non-face $M_i$ contains the odd vertices $v_j$ with $j \leq 2i-1$ and the even vertices with $j \geq 2(i+1)$. Moreover, if $2(i+1) > m$ then $M_i$ contains the odd vertices $v_j$ with $j \geq 2(i+1)-m$ and the even vertices with $j \leq 2i-1-m$. A facet $F$ of \de is a set of $n-3$ vertices which does not contain a minimal non-face. The later condition is equivalent to the statement that the three vertices not in $F$ form the pattern even-odd-even or odd-even-odd. This implies Gale's Evenness Condition.
\end{proof}
From these results, it is clear that the only unsuspended homology spheres with $n-d\leq 3$ and no more than eight minimal non-faces are $\bd C_2(5)$ (the pentagon) and $\bd C_4(7)$.
\section*{Acknowledgments}
I would like to thank Vic Reiner and Volkmar Welker for inspiring hints and discussions. Also I would like to thank Kenji Kashiwabara for constructing the example in Figure \ref{fig:Kenjis}.

\bibliography{Fewmnf}

\providecommand{\bysame}{\leavevmode\hbox to3em{\hrulefill}\thinspace}
\providecommand{\MR}{\relax\ifhmode\unskip\space\fi MR }
\providecommand{\MRhref}[2]{%
  \href{http://www.ams.org/mathscinet-getitem?mr=#1}{#2}
}
\providecommand{\href}[2]{#2}
\begin{thebibliography}{1}

\bibitem{Barnette197337}
D.~Barnette, \emph{The triangulations of the 3-sphere with up to 8 vertices},
  Journal of Combinatorial Theory, Series A \textbf{14} (1973), no.~1, 37--52.

\bibitem{lcm}
V.~Gasharov, I.~Peeva, and V.~Welker, \emph{{The LCM-lattice in monomial
  resolutions}}, Mathematical Research Letters \textbf{6} (1999), no.~5/6,
  521--532.

\bibitem{hibig}
T.~Hibi, K.~Kimura, and S.~Murai, \emph{{Betti numbers of chordal graphs and
  f-vectors of simplicial complexes}}, Journal of Algebra \textbf{323} (2010),
  no.~6, 1678--1689.

\bibitem{joswig2005one}
M.~Joswig and F.H. Lutz, \emph{{One-point suspensions and wreath products of
  polytopes and spheres}}, Journal of Combinatorial Theory, Series A
  \textbf{110} (2005), no.~2, 193--216.

\bibitem{Kamoi}
Y.~Kamoi, \emph{{On Gorenstein monomial ideals of codimension three.}}, Rocky
  Mt. J. Math. \textbf{25} (1995), no.~4, 1385--1393.

\bibitem{Mani}
P.~Mani, \emph{Spheres with few vertices}, Journal of Combinatorial Theory,
  Series A \textbf{13} (1972), no.~3, 346--352.

\bibitem{stan}
R.P. Stanley, \emph{Combinatorics and commutative algebra}, Birkh\"auser, 1996.

\bibitem{zie}
G.M. Ziegler, \emph{Lectures on polytopes}, Springer, 1995.

\end{thebibliography}
\bibliographystyle{amsplain}

\end{document}